\documentclass[reqno,10pt]{amsart}
\usepackage[english]{babel}
\usepackage{amsmath}
\newcommand  {\Wert}{{|\hspace{-0.03cm} \Vert }}
\usepackage{amssymb}
\usepackage[scr=boondoxo]{mathalfa}

\newcommand{\bpr}{\begin{trivlist} \item[]{\bf Proof. }}
\newcommand{\epr}{\hspace*{\fill} $\qed$\end{trivlist}}

%\topmargin -2cm
%\oddsidemargin 0cm
%\textwidth 6.5in
%\textheight 9.5in
%\include{texdef}

%%%%%%%%%%%%%%%%%%%%%%%%%%%%%%%%%%%%%%%%%%
%               Newcommands              %
%%%%%%%%%%%%%%%%%%%%%%%%%%%%%%%%%%%%%%%%%%

\renewcommand{\i}{{\mathrm i}}
\newcommand{\e}{{\mathrm e}}

\newcommand{\be}{\begin{eqnarray}}
\newcommand{\ee}{\end{eqnarray}}
\newcommand{\ba}{\begin{align}}
\newcommand{\ea}{\end{align}}
\newcommand{\bi}{\begin{itemize}}
\newcommand{\ei}{\end{itemize}}

\newcommand{\sU}{\mathscr U}
\newcommand{\sW}{\mathscr W}
\newcommand{\sC}{\mathscr C}
%\newcommand{\sU}{\mathbf U}
%\newcommand{\sW}{\mathbf W}
%\newcommand{\sC}{\mathbf c}
% MATH CONSTANTS
\newcommand{\spec}{\mathrm{spec}}
\newcommand{\id}{\mathrm{id}}
\newcommand{\kE}{\mathcal E}
\newcommand{\kF}{\mathcal F}
\newcommand{\kC}{\mathcal C}
\newcommand{\kZ}{\mathcal Z}
\newcommand{\kX}{\mathcal X}
\newcommand{\kY}{\mathcal Y}
\newcommand{\kL}{\mathcal L}
\newcommand{\kT}{\mathcal T}
\newcommand{\kM}{\mathcal M}
\newcommand{\kO}{\mathcal O}
\newcommand{\kB}{\mathcal B}
\newcommand{\kS}{\mathcal S}
\newcommand{\kV}{\mathcal V}

\newcommand{\kW}{\mathcal W}
\newcommand{\kH}{\mathcal H}

\renewcommand{\S}{\mathbb S}
\renewcommand{\P}{\mathbb P}
\newcommand{\N}{\mathbb N}
\newcommand{\C}{\mathbb C}

\newcommand{\R}{\mathbb R}
\newcommand{\Z}{\mathbb Z}

 \renewcommand{\d}{{\mathrm d}}
\newtheorem{theorem}{Theorem}[section]

\newtheorem{lemma}[theorem]{Lemma}
\newtheorem{cor}[theorem]{Corollary}
\newtheorem{example}[theorem]{Example}
\newtheorem{remark}[theorem]{Remark}

\numberwithin{equation}{section}

\begin{document}

\title {Exponential estimates of symplectic slow manifolds }

\author {K. U. Kristiansen and C. Wulff} 
%\date {}
\date\today
\maketitle

\vspace* {-2em}
\begin{center}
\begin{tabular}{c}
Department of Mathematics, \\
Technical University of Denmark, \\
2800 Kgs. Lyngby, \\
DK \\
and\\
Department of Mathematics, \\
University of Surrey, \\
Guildford, GU2 7XH\\
UK
\end{tabular}
\end{center}
 \begin{abstract}
In this paper we prove the existence of an almost invariant 
symplectic slow manifold for  analytic Hamiltonian  slow-fast systems with finitely many slow degrees of freedom for which the error field is exponentially small.
 We allow for infinitely many fast degrees of freedom.  
  The method we use is motivated by 
  a paper of MacKay from 2004. The method does not notice resonances, 
  and therefore we do not pose any restrictions on the motion normal to the 
  slow manifold other than it being fast and analytic.  We also present a stability result  and obtain a generalization of a result  of Gelfreich and Lerman on an invariant slow manifold to (finitely) many  fast degrees of freedom. 
\end{abstract}

\tableofcontents

%%%%%%%%%%%%%%%%%%%%%%%%%%%%%%%%%%%%%
%%%%%%%%%%%%%%%%%%%%%%%%%%%%%%%%%%%%%

%%%%%%%%%%%%%%%%%%%%%%%%%%%%%%%%%%%%%
\section{Introduction}\label{s.intro}
% \note{CW the references \cite{lor1,lor2, McQ1,micmen1,Mur1,romtur1,las1,las2,cotrei1, kap1, kap2}
% are not cited any more.  Please explain why you removed them. Most of them I do not have, so maybe they are not related to slow fast systems? But some surely are like \cite{lor1, lor2,cotrei1}! I quite liked the text about
%  metereology,  molecular physics and numerics which referred to these references! If you do not want to
% cite them please remove them from the list of references.
% Why is the reference to \cite{lor1} gone from here, but referred to later?}
 Singularly perturbed systems involving different time and/or space scales arise in a wide variety of 
scientific problems. Important examples include: meteorology and short-term weather forecasting 
\cite{lor2,lor1,tem1}, molecular physics and the Born-Oppenheimer approximation \cite{McQ1}, 
chemical enzyme kinetics and the Michaelis-Menten mechanism \cite{AFehrst,micmen1}, combustion \cite{UMaas}, and the evolution and stability of the solar system \cite{las1,las2}. 
% and in the modeling of tethered satellites \cite{kri1,kri2}.  
The main advantage 
of identifying slow and fast variables is dimension reduction by which all the fast variables are 
``slaved'' to the slow ones through the \textit{slow manifold}. In many dissipative systems, the rigorous foundation for a reduction to an attracting, invariant, lower dimensional manifold is provided by Fenichel's theory \cite{fen1,fen2}.
In contrast, the conservative cases, exemplified by models of nonlinear 
   waves  \cite{AK,lor1,lor2,mac2,van1} and of tethered satellites \cite{kri1,kri2,kri4}, with high frequency oscillations, do not support such a general theory.
Nevertheless, such systems may possess \textit{almost invariant slow manifolds}, see
\cite{geller1,Lu,mac1,nei87,van1},
  on which the angle between the vector field and the tangent space is exponentially small (with respect to the time-scale separation). Orbits of the system may therefore spend a significant amount of time near such a slow manifold. 
  
  Dimension reduction is one of the main 
aims and tools for a dynamicist and the elimination of fast variables is very useful in, for example,
numerical computations. In fact, since fast variables require more computational effort and evaluations,
 this reduction often bridges the gap between tractable and intractable computations. An example of this
  is the long time ($Gyear$s) integration of the solar system, see \cite{las1,las2}. See also 
  \cite{cotrei1,kap1,kap2} 
for a numerical treatment of slow-fast systems.

  In this paper, we 
prove the existence of an exponentially accurate  symplectic slow manifold for  a  general class of analytic slow-fast Hamiltonian systems. Here we allow the fast sub-system to be a semilinear evolution PDE. We also present a stability result, and obtain a generalization of a result in \cite{geller2} on an invariant slow manifold to (finitely) many  fast degrees of freedom.

%%%%%%%%%%%%%%%%%%%%%%%%%%%%%%%%%%%%%%%%%%%%%%%%%%%%%%%%%%%%%%%%%%%%%%%%%%%%%%%%%%%%%%%%%%
\subsection{Slow-fast systems.}
Consider the system
\begin{align}
 \dot w &=\epsilon   W(w, z),\quad \dot   z =   Z(w,  z),\label{e.xys0}
\end{align}
where $\dot{()}=\frac{\d}{\d t}$ and 
with $\epsilon$ a small parameter. The analytic vector-fields $  W$ and $  Z$ will in general also 
depend upon $\epsilon$, but we shall suppress this dependency throughout.

Taylor-expanding the second equation of \eqref{e.xys0} around $z=0$ gives 
\begin{align}
 \dot w =\epsilon W(w,z),\quad \dot z = {r}(w) + A(w)z+{F}(w,z),\label{e.xys1}
\end{align}
where $F=\kO(\|z\|^2)$. Note that if $r\equiv 0$ then the  manifold
 $\kM= \{z=0\} $ is invariant. 
Therefore  $\kM= \{z=0\} $ is called  a slow manifold in the sense of MacKay 
\cite[Definition 1]{mac1} if $\|r\|$ is small 
and  
  $\Vert (\partial Z/\partial z)(w,0)^{-1}\Vert = \|A(w)^{-1}\|$ exists. This ensures that
     $\kM$ is close to being invariant, that the normal motion is truly fast, and that we can solve the equation $\dot z = 0$ for $z= \zeta(w)$. The latter property 
     allows us to compute an approximately invariant slow manifold.
   In particular, if $A(w)$ is  elliptic or hyperbolic 
      then $\kM$ is said to be normally elliptic respectively hyperbolic.
One of the main tasks in singular perturbation theory is to determine the fate of $\kM$ for 
$\epsilon>0$ but small. When
 $\kM$ is  normally hyperbolic then there exists a perturbed, invariant slow manifold $\kM(\epsilon)$
 for $\epsilon\ne 0$ nearby \cite{fen1,fen2}. 
The invariance condition is, however, too much to aim for in the normally elliptic setting because normally elliptic manifolds are unlikely 
 to persist under  typical perturbations \cite{mac1,man1}. 
 However the normally elliptic setting is an important case in Hamiltonian systems which are our main point of interest in this paper. 
 In this case, the important  question is how to 
 improve the accuracy of  approximately invariant slow manifolds
 and to determine how long 
 orbits of the full system remain close to those of the approximate dynamics on the slow manifold. 
 
%%%%%%%%%%%%%%%%%%%%%%%%%%%%%%%%%%%%%%%%%%%%%%%%%%%%%%%%%%%%%%%%%%%%%%%%%%%%%%%%%%%%%%%%%%
\subsection{Improving a slow manifold}
 MacKay, still in \cite{mac1}, presented a method for improving a given slow manifold and he sketched how, when applied successively to an analytic system of  differential equations, this could lead to an exponentially accurate slow manifold $\kM$ in the sense that the error field is $\|r\|=\kO(e^{-c/\epsilon})$.
The method of MacKay  for improving a slow manifold does not separate 
normally hyperbolic from normally elliptic slow manifolds.
Moreover MacKay allowed for an unbounded fast vector field and argued that one of the advantages of his method is that the improved slow manifold contained all nearby equilibria and conjectured that the error in fact behaved like 
 $\kO((\epsilon \Vert W(w,\cdot)\Vert)^n)$, for any $n$, for analytic systems eventually leading to $\kO(e^{-c/(\epsilon \Vert W(w,\cdot)\Vert)})$, with $W$ vanishing on equilibria. %The linear $\dot w=\epsilon w$, $\dot z=\epsilon w -z$ is a simple counter example to this last statement, as MacKay's method gives  
 In the appendix we provide a simple counterexample  (Example \ref{ex.e1}) to this last statement for $n>1$.
 
The method of MacKay was previously presented by Fraser and Roussel in \cite{fra1, Roussel}
 in a slightly different form. In the reduction method literature it is sometimes therefore also referred to as the iterative method of Fraser and Roussel, see e.g., \cite{kap3}. 

Moreover, it seems that it was unknown to MacKay that Neishtadt in \cite[Lemma 1]{nei87} had already proven exponential estimates of the form $\kO(e^{-c/\epsilon})$. Neishtadt did, however, not address equilibria and the method he used is (slightly) different from MacKay's.
 Notably it does not contain all equilibria near the slow manifold. Moreover, Neishtadt considered finite dimensional systems and did not consider the Hamiltonian case. 

The method of MacKay can be sketched as follows:
Setting $\dot z=0$ 
in \eqref{e.xys0}  gives, by applying the implicit function theorem
(bearing in mind that ${F}$ is quadratic in $z$), a solution $z=\zeta(w)$ close to 
$-A(w)^{-1}{r}(w)$ provided that $r$ is sufficiently small. The graph $z=\zeta(w)$ will be the improved slow manifold (see Lemma \ref{l.ZetaG} below for details). To show 
that this is indeed an improved slow manifold, one  straightens out the new slow manifold by 
introducing $z_1$ through $z=z_1+\zeta$. Then the equations become
\begin{align*}
 \dot w= \epsilon W_1(w,z_1),\quad \dot z_1 &= Z_1(w,z_1)=
r_1(w) + A_1(w) z_1 + F_1(w,z_1),
\end{align*}
with 
\begin{align}
 r_1(w) = -\epsilon \partial_w \zeta (w) W(w,\zeta(w)).\label{e.rho11}
\end{align}
 Here $\partial_w$ is used to denote the (Frechet) partial derivatives 
$\frac{\partial }{\partial w}$, and we will continue to use this symbol regardless of what object is being differentiated. 
Since $\zeta = \kO(\Vert r\Vert)$ the error vector field has been diminished by a factor $\kO(\epsilon)$. Hence
 $\kM_1 = \{z_1=0\}$ is an improved slow manifold. Note that $\kM_1$ includes nearby equilibria, cf.~\eqref{e.rho11}. Neishtadt in \cite[Lemma 1]{nei87} based his transformations on $z=z_1+\tilde \zeta$, $\tilde \zeta(w) = -A(w)^{-1}r(w)$, giving rise to the error  
\begin{align*}
\tilde r_1 = -\epsilon \partial_w \tilde \zeta (w) W(w,\tilde{\zeta}(w))+F(w,\tilde \zeta(w)).
\end{align*}
Note that as opposed to \eqref{e.rho11}, $\tilde r_1$ does not vanish at all nearby equilibria as the error now comes from two separate contributions.
 
MacKay conjectured that applying this procedure of constrained equilibria, setting $z_i=z_{i+1}+\zeta_i(w)$ with $z=\zeta_i(w)$ solving $Z_i(w,\zeta_i)=0$, successively would lead to a slow manifold  with an error-field of $\kO(e^{-c/\epsilon})$. Note that $w$ is never transformed and that only the inverse of the possibly unbounded operator $A_i$ occurs. Therefore unbounded $A(w)$ can also be accounted for. We prove this result in Appendix \ref{a.genves}. Even though this result has to be attributed to Neishtadt, in contrast to Neishtadt's original result, we  (i) follow  MacKay and allow for an unbounded fast vector field where the $\dot z$ equation is a semilinear evolution equation, and (ii) using MacKay's method we obtain a slow manifold  that contains all nearby
equilibria.

%%%%%%%%%%%%%%%%%%%%%%%%%%%%%%%%%%
%%%%%%%%%%%%%%%%%%%%%%%%%%%%%%%%%%
%%%%%%%%%%%%%%%%%%%%%%%%%%%%%%%%%%%%%%%%
%%%%%%%%%%%%%%%%%%%%%%%%%%%%%%%%%%%%%%%%

\subsection{Symplectic slow manifolds}
 In this paper we  focus on slow-fast Hamiltonian systems of the form (see also \cite{geller1}):
\begin{align*}
 H &= H(w,z),\quad w=(u,v) \,\,\text{slow},\quad z=(x,y) \,\,\text{fast},\quad
 J=\text{diag}\,(\epsilon J_{\kW},\,J_{\kZ}),% \quad \text{symplectic structure operator},
\end{align*}
where, as opposed to regular perturbation theory, the main contribution to the perturbation comes from the symplectic structure operator $J$.  This gives rise to the following equations of motions
\begin{align*}
 \dot w =\epsilon J_{\kW} \nabla_w H,\quad
 \dot z=J_{\kZ} \nabla_z H.
\end{align*}
We assume that the $\dot z$ equation is a semilinear Hamiltonian evolution equation, as detailed later.
 Normally elliptic slow manifolds are  of  particular interest in Hamiltonian systems as stability here is associated with oscillatory normal behavior. In Hamiltonian systems there are also generically invariant manifolds that are not normally hyperbolic, for example families of linearly stable periodic orbits parametrized by energy.
 Here we are interested in  approximately invariant symplectic slow manifolds
 on which we can define a ``slow'' Hamiltonian system.

The Hamiltonian case has previously been considered by Gelfreich and Lerman in \cite{geller1}. They restricted to one fast degree of freedom and also obtained an exponentially accurate slow manifold using an averaging method, similar to the one used by Neishtadt in \cite{nei2}.
 
For {the} Hamiltonian example
\begin{align}
 H=\frac12 x^2+\frac12 y^2+v + \epsilon y f(u),\label{e.nex}
\end{align}
with analytic $f(u) = \sum_{n=1}^\infty \e^{-n}\sin (n u)$ and $\omega =\d x\wedge \d y+\epsilon^{-1} \d u\wedge \d v$, Neishtadt
 showed 
that the slow manifold cannot be improved beyond such an estimate, see \cite{geller1}.  The exponential 
estimate is therefore the best one can aim for in a general non-hyperbolic setting. 
The method of averaging used in \cite{geller1} does not extend to several fast variables primarily due to 
the general lack of control of resonances between the fast variables. On the other hand, it also aims at more than what MacKay and Neishtadt did in \cite{mac1} and  \cite{nei2}: 
the results of \cite{geller1} do not only provide exponential 
estimates of a slow manifold, they also provide an $\kO(1)$-foliation, 
parametrized by the action variable, of almost invariant slow manifolds. 
The method therefore also addresses stability, 
{not only existence of an accurate slow manifold.} 
The reference \cite{mat1} extends the results of \cite{geller1} to infinite dimensional slow dynamics. 
The results {of  \cite{mat1}} 
hold true for \textit{spatially} Gevrey smooth solutions, which allow for a Galerkin approximation that
 separates the vector-field into a bounded one and an exponential small remainder. 
The references \cite{van1, wir1} also provide exponential estimates of 
    particular slow manifolds in geophysical models by obtaining optimal truncations of the 
    ``super-balance equation'' (invariance equation) of Lorenz \cite{lor1}. 
 
 %%%%%%%%%%%%%%%%%%%%%%%%%%%%%%%%%%%%%%%%%%%%%%%%%%%%%%%%%

\subsection{Improving symplectic slow manifolds}

MacKay, still in \cite{mac1}, suggested a separate method for improving slow manifolds in Hamiltonian systems. The proposed method was described as follows: Consider a Hamiltonian $H=H(p)$ with symplectic form $\omega$ and a slow manifold $\kM_0$.
 Do the following:
\begin{itemize}
 \item Compute an orthogonal symplectic foliation $\kF_p$ so that for every $p\in \kM_0$  
\begin{align*}
\omega({p}_1,{p}_2) = 0,\,{p}_1 \in \kT_p \kF_p,\,{p}_2\in \kT_p {\kM}_0.
\end{align*}
Here $\kT_p \kM_0$ denotes the tangent space of $\kM_0$ at $p$.
\item Let $H_p = H\vert_{\kF_p}$ and solve this for a nearby critical point $p_1=p_1(p)$.
\item Put $\kM_1=\{p_1(p)\}$ and $\omega_{\kM_1} = \omega\vert_{\kM_1}$.
\end{itemize}
Then $(H\vert_{\kM_1},\omega_{\kM_1})$ is an improved slow system. For the further details see \cite{mac1}. However, we believe that this method has some drawbacks. First of all, the method requires the computation of a new slow symplectic form at each step. In fact, we believe that the reason for suggesting an alternative {to the general approach} in the first place, is that one wishes to introduce transformations that preserve the symplectic structure.
Moreover, MacKay's method also requires the  computation of  orthogonal symplectic foliations at each step.

  We will therefore suggest an alternative method that circumvents these issues.
 Our method is then a symplectic extension of MacKay's general approach outlined above. We will at each step straighten out the improved manifold given as the solution $z=\zeta(w)$ of
  \begin{align*}
   J_{\kZ}^{-1} Z(w,z) = \nabla_z H(w,z) = 0,
  \end{align*}
ensuring that the transformation involved in this procedure is symplectic.
% Here we will make use of Lemma 2 in \cite{geller1}. 
The slow symplectic form with symplectic structure matrix $\epsilon^{-1} J_{\kW}^{-1}$ will therefore remain constant throughout the iteration. For analytic Hamiltonian systems where the fast system is a semilinear evolution equation we obtain a symplectic slow manifold with exponentially small error field containing an initially nearby equilibrium, as was also conjectured by MacKay. 
  This is the  main result of this paper. We will present this formally in Theorem \ref{t.main} which we prove in Section \ref{s.hamfin}. In Section \ref{s.Main} we also state and prove a stability result, see Corollary \ref{c.stab}. As opposed to the general case in Appendix \ref{a.genves}, the symplectic nature of the problem requires us to transform the slow variables.  
Note that Lu \cite{Lu} modified our approach,  presented in  a previous preprint version of this paper, and applied it to a more specific problem where the dynamics on the fastest scale is linear, 
see Remark \ref{r.Lu} for more details. 
 In Section \ref{s.psm} we present a dynamical consequence of this result on the persistence of a one degree of freedom slow manifold with exponentially small gaps. 

%%%%%%%%%%%%%%%%%%%%%%%%%%%%%%%%%%%%%%%%%%%%%%%%%%%
%%%%%%%%%%%%%%%%%%%%%%%%%%%%%%%%%%%%%%%%%%%%%%%%%%%
%%%%%%%%%%%%%%%%%%%%%%%%%%%%%%%%%%%%%%%%%%%%%%%%%%%

%%%%%%%%%%%%%%%%%%%%%%%%%%%%%%%%%%%%%%%%%%%%%%%%%%%%%%%%%%%%%%%%%%%%%%%%%%%%%%%%%%%%%%%%%%

\section{Exponential estimates for symplectic slow manifolds}
In this section we first introduce some  notation (Section \ref{ss.notation}).  Then, in Section \ref{ss.Setting}, we introduce the setting we work in, in particular our assumptions on the fast Hamiltonian semilinear evolution equation.  In Section \ref{s.Main} we present our main result and in Section \ref{ss.Examples} we consider two examples
where the fast dynamics is a  nonlinear Schr{\"o}dinger equation and a  semilinear wave equation respectively.

%%%%%%%%%%%%%%%%%%%%%%%%%%%%%%%%%%%%%%%%%%%%%%%%%%%%%%%%%%%%%%%%%%%%%%%%%%%%%%%%%%%%%%%%%%

\subsection{Some notations}\label{ss.notation}
Let $(\kW,\Vert \cdot \Vert_{\kW})$ and
$(\kZ,\Vert \cdot \Vert_{\kZ})$ be real Banach spaces 
and  let $\kW^{\C}=\kW\oplus \i\kW$ and 
$\kZ^{\C}=\kZ\oplus \i\kZ$, respectively, be their 
complexifications with norms 
$\Vert w_1+\i w_2\Vert_{\kW^{\C}}=
(\Vert w_1\Vert_{\kW}^2+\Vert w_2\Vert_{\kW}^2)^{1/2}$ 
and $\Vert z_1+\i z_2\Vert_{\kZ^{\C}}
=(\Vert z_1\Vert_{\kZ}^2+\Vert z_2\Vert_{\kZ}^2)^{1/2}$.
  Then $f:\kV^{\C}\rightarrow \kZ^{\C}$, with $\kV^{\C}$ an open subset of $\kW^{\C}$, is analytic if it is continuously differentiable, i.e., if there exists a continuous derivative $\partial_w f:\kV^{\C}\rightarrow \kE(\kW^\C;\kZ^\C)$,  where $\kE(\kW^\C;\kZ^\C)$ is
 the Banach space of { bounded} complex linear operators from $\kW^\C$ to $\kZ^\C$ equipped with the operator norm, 
 satisfying the following condition
\[\Vert f(w+h)-f(w)-\partial_w f(w)(h)\Vert = \kO(\Vert h\Vert^2).\]
For later purpose we define $\kE(\kZ):= \kE(\kZ;\kZ)$.
By a real analytic  function we  mean an analytic function which is real valued when its arguments are real. 
The higher order derivatives can be defined inductively and $\partial_w^n f$ becomes a map
\begin{align*}
 \partial_w^n f:\kV^{\C}\rightarrow \kE^n(\kW^{\C};\kZ^{\C}),
\end{align*}
from $\kV^{\C}$ into the Banach space $\kE^n(\kW^{\C};\kZ^{\C})$ of all bounded, $n$-linear maps from $\kW^{\C}\times \cdots \times \kW^{\C}$ ($n$ times) into $\kZ^{\C}$.
See, for example, \cite[Appendix A]{pos1} for a reference on analytic function theory in Banach spaces. When $\kV$ is an open subset of $\kW$  and $\nu>0$ then, as in \cite{geller1}, we define $\kV+\i\nu$ to be the open complex $\nu$-neighborhood of $\kV$:
\begin{align*}
 \kV+\i\nu = \{w\in \kW^{\C}\mid \text{dist}_{\kW^{\C}}(w,\kV) < \nu\},
\end{align*}
where $\text{dist}_{\kW^{\C}}$ is the metric induced from the Banach norm $\Vert\cdot \Vert_{\kW^{\C}}$.
In the following let $\kB_r^{\kZ^\C}(z)= \{ u \in \kZ^{\C}, \|u-z\|<r\}$
denote a $\kZ^{\C}$-open ball of radius $r>0$ around $z$ in the Banach space $\kZ^\C$.

%%%%%%%%%%%%%%%%%%%%%%%%%%%%%%%%%%%%%%%%%%

\subsection{Setting and assumptions}\label{ss.Setting}

We consider a slow-fast Hamiltonian system with possibly infinitely many fast degrees of freedom of the form
\begin{equation}
\label{e.HamPDE}
\dot w_0 = \epsilon J_{\kW} \nabla_{w_0} H_0(w_0,z_0), \quad
\dot z_0 = J_{\kZ} \nabla_{z_0} H_0(w_0,z_0) = 
J_\kZ L z_0 +B_0(w_0,z_0).
\end{equation}
Here $w_0=(u_0,v_0) {\in \kW}$,
{$\dim \kW = 2d_\kW$}, are the slow variables 
and $z=(x,y)
{\in\kZ}$  are the fast 
variables,   $\kZ$ is a real Hilbert space with inner product $\langle \cdot, \cdot \rangle = \langle \cdot, \cdot \rangle_\kZ$, $\dim\kZ = 2d_\kZ$
and we allow for $d_\kZ=\infty$.  
We use a subscript on $H_0$ and $B_0$ because we will transform this
system into  a more desirable form in the main theorem (Theorem \ref{t.main}).
The symplectic structure operator is  
\begin{align}
J = \text{diag}\,(\epsilon J_{\kW},J_{\kZ}),\label{eq.JFull}
\end{align}
where $J_\kW$ is the standard symplectic matrix on $\kW= \R^{2d_\kW}$, i.e. 
\begin{align*}
 J_{\kW} &= \begin{pmatrix}
        0 & \id\\
        -\id & 0
       \end{pmatrix}\in \R^{2d_{\kW}\times 2d_{\kW}}
\end{align*}
with $\id$ being
 the identity on $ \R^{d_\kW}$.
We assume that  
\begin{itemize}
\item[(H0)] 
 $ J_{\kZ}$ and $J_\kZ L$ 
are  densely defined, closed, skew-symmetric invertible linear operators on $\kZ$ and $L$ is a bounded, self-adjoint operator.
\end{itemize}
Note that (H0) implies that $J_\kZ$ and $L$ commute.
 
   In the following let $\kV \subset \kW$ be open, let $\kS \subseteq \kZ$
be an open neighbourhood of $0$ and let $w \in \kV + \i\nu_0$, $z \in \kS + \i \sigma_0$
where $\nu_0, \sigma_0>0$.  Assume that $B_0:(\kV+\i \nu_0)\times (\kS+ \i\sigma_0)\to \kZ^\C$ is analytic, as detailed in (H1) below. 
In this setting, 
by semigroup theory
\cite{P83}, the flow of \eqref{e.HamPDE} is well-defined.
The  Hamiltonian of \eqref{e.HamPDE} is 
\begin{equation}
\label{e.HamInf}
H_0(z_0,w_0) = \frac{1}{2}\langle L z_0, z_0\rangle + V_0(z_0,w_0), \quad\mbox{where}\quad
 B_0= J_\kZ \nabla_{z_0} V_0.
\end{equation}
Then $H_0:(\kV+\i \nu_0)\times (\kS+ \i\sigma_0)\to\C$ is analytic { (here and
below we extend the inner product on $\kZ$ analytically
 to the bilinear form $\langle z_1, z_2 \rangle = \langle  z_1, \bar z_2\rangle_{\kZ^\C}$ on $\kZ^\C$, where $ \langle \cdot, \cdot \rangle_{\kZ^\C}$ is the inner product on $\kZ^\C$).}
We also allow $H_0$ to depend continuously on $\epsilon$, but this dependence will be suppressed in the notation.

{ Note that $J$ in \eqref{eq.JFull} defines a symplectic form $ \omega_{\kW \times\kZ}$ on $ \kW\times \kZ$
given by $ \omega_{\kW \times\kZ} = \omega_{\kZ} + \omega_\kW$ where
$ \omega_\kW(w_1, w_2) = (\langle u_1, v_2\rangle - \langle u_2, v_1\rangle)/\epsilon$ for $w_i=(u_i, v_i)$, $i=1,2$, and
$\omega_\kZ(z_1, z_2) := \langle  J_{\kZ}^{-1} z_1, z_2 \rangle_\kZ$ is a bounded, antisymmetric nondegenerate bilinear form on $\kZ$. Moreover 
\eqref{e.HamPDE} is a Hamiltonian system, defined as 
$\omega(X_H, Y) = \partial_Y H(Y_0) Y $  for each $Y = (w,z)$, where $\omega = \omega_{\kW \times\kZ}$ and $X_H(Y_0)= \dot Y_0$ is the right hand side of \eqref{e.HamPDE}, (see e.g., Definition 3.3.1 of \cite{AbrMars}). As usual we define
$\nabla_z H(w_0,z_0)$ by the requirement that 
\begin{equation}\label{e.grad}
\partial_z H(w_0, z_0) z = \langle \nabla_z H(w_0,z_0), z\rangle 
~\mbox{for all}~z\in \kZ^\C, (w_0, z_0) \in (\kV+\i\nu_0)\times (\kS+\i\sigma_0).
\end{equation}
}
We define $\kZ_1:= D(J_\kZ)$.  For initial data in $\kZ_1$ the solution of \eqref{e.HamPDE} is differentiable in time in $\kZ$ and so a classical solution in $\kZ$, see \cite{P83}.
The space $\kZ_1$ is a real Hilbert space with inner product 
\[
\langle\langle z_1,z_2 \rangle \rangle=\langle J_\kZ z_1,J_\kZ z_2 \rangle
\]
 and norm $\Wert \cdot \Wert$ which is stronger than $\Vert \cdot \Vert$, i.e., 
  \begin{align}\label{e.ZZ1norm}
  \Vert z\Vert \le  \Wert z\Wert\quad \forall z\in \kZ_1.
  \end{align}
  Here we assume without loss of generality
 \[c_J:=
 \|J_\kZ^{-1} \|_{\kE(\kZ)}  \leq 1.
 \]
 (If $c_J>1$ then we change the inner product on $\kZ$ to $c_J \langle \cdot, \cdot \rangle$
  and $J_\kZ$ to $ c_J J_\kZ$, 
 $\nabla_z$ to $c_J^{-1} \nabla_z$ to achieve $\|J_\kZ^{-1} \|_{\kE(\kZ)}\leq 1$.) 
  Moreover by definition $J_\kZ^{-1}$ maps $\kZ$ onto $\kZ_1$.
  Note that $B_0:\kV\times \kS\to \kZ$ and $  \nabla_z V_0=J_\kZ^{-1}B_0$ 
  imply that 
  \begin{equation}\label{e.nablazV}
  \nabla_z V_0:\kV\times \kS\to \kZ_1.
  \end{equation}
  Taylor-expanding $V_0$ around $z=0$
   then gives
 \begin{align}
   H_0(w_0,z_0) = h_0(w_0)+\langle r_0(w_0),z_0\rangle+\frac12 \langle (L+ a_0(w_0))z_0,z_0\rangle + f_0(w_0,z_0),\label{e.H0Z0}
 \end{align}
 where $f_0= \kO(\|z_0\|^3)$ and  $(w_0,z_0)\in \kW\times \kZ$.
 Then \eqref{e.nablazV} implies that
 \begin{align}
 r_0:&\,\kV \to \kZ_1,\label{e.rZ1}
 \end{align}
 and 
 \begin{align*}
 a_0:&\,\kV \to \kE(\kZ; \kZ_1), \quad
 F_0 = \nabla_z f_0: \,\kV\times \kS \to \kZ_1.\nonumber
 \end{align*}

 We then consider Hamiltonians of the form \eqref{e.H0Z0}  
 and assume the following:
 
\begin{enumerate}\item[(H1)]  The Hamiltonian $H_0$ is real analytic and uniformly bounded on $(w_0,z_0)\in (\kV+\i\nu_0)\times (\kS+\i\sigma_0)$, where $\kV \subset \kW$ is open, $\kS \subset \kZ$ is an open neighbourhood of $0$ in $\kZ$ and $\sigma_0, \nu_0>0$. 
 \item[(H2)]  
  The map
\begin{align*}
  a_0: \kV+\i\nu_0 \to \kE(\kZ;\kZ_1)
\end{align*}
is  real analytic. Moreover, 
\[
\Vert a_0\Wert_{\nu_0} :=
\sup_{w_0\in \kV+\i\nu_0} \Vert a_0(w_0)\Vert_{\kE(\kZ; \kZ_1)}
\leq C_{a_0},
\]
and
\[
\Wert A_0(w_0)^{-1}\Wert_{\nu_0} :=\sup_{w_0\in \kV+\i\nu_0} \Vert A_0(w_0)^{-1} \Vert_{\kE(\kZ_1)} \le K_0/2,
\]
where \[A_0(w):= L + a_0(w).\]
\item[(H3)]
The functions 
\begin{align*}
{r}_0&=\nabla_{z} H_0\vert_{z_0=0}:(\kV+\i\nu_0) \rightarrow \kZ_{1}^\C,\\
h_0&=H_0\vert_{z_0=0}:(\kV+\i\nu_0) \rightarrow {\C},
\end{align*}
and  
$f_0$ with $$\nabla f_0 ={F}_0:(\kV+\i\nu_0)\times (\kS+\i\sigma_0)\rightarrow \kZ_{1}^\C,$$
 are real analytic and uniformly bounded:
\begin{align}
 \Wert {r}_0 \Wert_{\nu_0} &\leq  \delta_0  <\infty\nonumber \\
\Vert {h}_0 \Vert_{\nu_0}&\leq  C_{h_0} <\infty, \nonumber \\
\Vert {F}_0 \Wert_{\nu_0,\sigma_0} & \leq C_{F_0}<\infty, \nonumber\\
\Vert {f}_0 \Vert_{\nu_0,\sigma_0} & \leq C_{f_0}<\infty. \nonumber\\
\Vert \partial_w {h}_0 \Vert_{\nu_0} & \leq C_{h_0}' <\infty \nonumber.
\end{align}
 Here we denote  by 
$\Vert \cdot \Vert_{\nu_0,\sigma_0}$ 
 the sup-norm taking over the domain $(\kV+\i\nu_0) \times (\kS+\i\sigma_0)$. Moreover we define   $\Vert \cdot \Wert_{\nu_0,\sigma_0}$ 
to be the the sup-norm taking over the domain $(\kV+\i\nu_0) \times (\kS+\i\sigma_0)\subset \kW^{\C}\times \kZ^{\C}$ in the $\kZ_1^\C$ norm.
\end{enumerate}
Note that $r_0$ is measured in the $\kZ_1$-norm $\Wert \cdot \Wert$ because of \eqref{e.rZ1}.

%%%%%%%%%%%%%%%%%%%%%%%%%%%%%%%%%%%%%%%

\subsection{Main result}\label{s.Main}
We are now ready to formulate our main result:

\begin{theorem}\label{t.main}
Assume (H0)-(H3) and  let  $\nu_0>\nu > 0$, $\sigma_0 >\sigma> 0$.
 Then for
 $\delta_0>0$ and  $\epsilon>0$ sufficiently small the following holds true: There exists a symplectic transformation 
 $(w,  z)\mapsto (w_0,z_0)$, $(  w,  z)\in (\kV+\i {\nu})\times (\kS+\i \sigma)$ with 
  $\Wert   z-z_0 \Wert_{ {\nu}, {\sigma}},\,\Vert   w-w_0 \Vert_{ {\nu}, {\sigma}}=\kO(\delta_0)$ transforming \eqref{e.HamInf} into
 \begin{align}
   H(  w,  z)=H_0(w_0,z_0)=  h(  w)+ \langle r(  w), z\rangle
    +\frac12 \langle (L+a( w))z,  z\rangle +  f(  w,  z),\label{e.Htilde}
 \end{align}
with $f= \kO(\|z\|^3)$,  $F=\nabla f$,
\begin{align*}
\Vert h-h_0\Vert_\nu, \Vert a-a_0\Wert_\nu,  \Vert f-f_0\Vert_{\nu,\sigma},
\Vert F-F_0\Wert_{\nu,\sigma} = \kO(\delta_0)
\end{align*}
  and where
\begin{align*}
 \Wert r\Wert_{\nu} \le C_1 e^{-C_2/\epsilon}.
\end{align*}
 Here $C_1$ and  $C_2$  are positive constants that depend solely on 
$C_{a_0}$, $ C_{h_0}'$, $K_0$, $C_{F_0}$, $C_\kS$, $C_H$, $\sigma_0$, $\nu_0$,  
$\sigma$, $ \nu$,
where $C_\kS = \Vert z_0\Vert_{\sigma_0}$. 
\end{theorem}
In other words: $\{  z=0\}$ is an \textnormal{almost} invariant symplectic slow manifold. 
 Note that the flow corresponding to the transformed Hamiltonian $H$ is well-defined
 because the $\dot z$ equation is again a  semilinear evolution equation of the form considered in \cite{P83}.

%%%%%%%%%%%%%%%%%%%%%%%%%%%%%%%%%%%%%
%%%%%%%%%%%%%%%%%%%%%%%%%%%%%%%%%%%%%

Next we address the stability of the slow manifold:

\begin{cor}\label{c.stab}
Under the assumptions of Theorem \ref{t.main} consider the transformed Hamiltonian \eqref{e.Htilde}   on the real domain
    $\kV \times \kS$. If $A_0(w) = L+ a_0(w)$ is positive  definite then 
    $$\ell(w,z)=\frac12 \langle z, A(w)z\rangle +{f}(w,z),$$ is an approximate Lyapunov function 
    for $\Vert z\Vert$, $\delta_0$ and $\epsilon$ sufficiently small, and there exist constants 
{$c_1$} and {$c_2$} so that
\begin{align*}
 \Vert z(t)\Vert \le \kO(e^{-c_1/\epsilon})\quad
  \text{for}\quad 0\le t\le c_2\epsilon^{-2},
\end{align*}
when $z(0)=0$, provided that $w(t) \in \kV$ for $ 0\le t\le c_2\epsilon^{-2}$.
 \end{cor}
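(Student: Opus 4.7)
The plan is to use $L$ as a Lyapunov-type function controlling $\|z\|^2$, derive a differential inequality of the form $\dot L \lesssim \epsilon^{2}e^{-C_2/\epsilon}\|z\|+\epsilon\|z\|^{2}$ by exploiting the Hamiltonian structure, and then integrate this inequality to exhibit the exponential decay up to time $\mathcal O(\epsilon^{-2})$.

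First I would use assumption that $A_0$ is (uniformly) positive definite on $\kV \times \kS$, say $\langle A_0(w)z,z\rangle\ge 2\alpha\|z\|^{2}$, together with the estimate $\|A_{*}-A_0\|_{\underline{\nu},\underline{\sigma}}\le C_4\epsilon$ from part (ii) of \thmref{thm2}, to conclude that $A_{*}$ is uniformly positive definite for $\epsilon$ small. Since ${r}_{*}=\mathcal O(z^{3})$ with uniformly bounded derivatives (Cauchy estimates on the bound for $r_*$), there is a radius $\delta>0$, independent of $\epsilon$, such that
\begin{align*}
\tfrac{\alpha}{4}\|z\|^{2}\le L(w,z)\le \alpha\|z\|^{2},\quad \|\partial_z L\|\le C\|z\|,\quad \|\partial_w L\|\le C\|z\|^{2},\qquad \|z\|\le \delta.
\end{align*}

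Next, I would compute $\dot L$ along the Hamiltonian flow of $H_{*}$ with the symplectic form \eqref{e.omega}. The equations of motion are $\dot z=J_z\partial_z H_{*}$ and $\dot w=\epsilon J_w\partial_w H_{*}$, where $J_z,J_w$ are the standard antisymmetric symplectic matrices on $\kZ$ and $\kW$. The key algebraic observation is $\partial_z H_{*}=\rho_{*}+A_{*}z+\partial_z r_{*}=\rho_{*}+\partial_z L$, so antisymmetry of $J_z$ gives
\begin{align*}
\partial_z L\cdot\dot z=\partial_z L\cdot J_z\partial_z L+\partial_z L\cdot J_z\rho_{*}=\partial_z L\cdot J_z\rho_{*}.
\end{align*}
Using $\|\rho_{*}\|\le C_1\epsilon^{2}e^{-C_2/\epsilon}$, the boundedness of $\partial_w H_{*}$ (which follows from $\|\partial_w h_{*}\|\le C'_{h_*}$ plus lower-order terms in $z$), and the two bounds displayed above, this produces
\begin{align*}
\dot L(t)\le K_1\epsilon^{2}e^{-C_2/\epsilon}\|z(t)\|+K_2\,\epsilon\,\|z(t)\|^{2},
\end{align*}
as long as $\|z(t)\|\le\delta$.

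Finally, setting $u(t)=\sqrt{L(t)}$ and using $\|z\|\le 2u/\sqrt{\alpha}$, the inequality $2u\dot u=\dot L$ becomes the linear scalar inequality $\dot u\le \tilde K_1\epsilon^{2}e^{-C_2/\epsilon}+\tilde K_2\,\epsilon\,u$. Since $u(0)=0$, Gronwall yields
\begin{align*}
u(t)\le \frac{\tilde K_1\epsilon}{\tilde K_2}e^{-C_2/\epsilon}\bigl(e^{\tilde K_2\epsilon t}-1\bigr),
\end{align*}
so for $0\le t\le c_2\epsilon^{-2}$ with any $c_2<C_2/\tilde K_2$ we obtain $\|z(t)\|=\mathcal O(e^{-c_1/\epsilon})$ with $c_1=C_2-\tilde K_2 c_2>0$. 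A standard continuity/bootstrap argument justifies that the trajectory remains in the region $\|z\|\le\delta$ throughout this time interval, so the inequality on $\dot L$ is valid. The main obstacle is precisely bookkeeping the $\epsilon$-uniformity of all constants (positive-definiteness constant $\alpha$, derivative bounds on $A_{*}$, $h_{*}$, $r_{*}$) and closing the bootstrap; both follow directly from the uniform bounds of part (ii) of \thmref{thm2} and Cauchy's estimate (\lemmaref{cest}) applied to $r_{*}$ on the strip $(\kV+i\underline{\nu})\times(\kS+i\underline{\sigma})$.
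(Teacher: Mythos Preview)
Your argument is correct and follows essentially the same route as the paper: the same Lyapunov function $L$, the same cancellation $\langle\partial_z L,J_z\partial_z L\rangle=0$ from antisymmetry of $J_z$, and the same reduction to a Gronwall inequality. The only cosmetic difference is in how the Gronwall step is executed: the paper bounds the first term of $\dot L$ crudely by a constant (absorbing the $\Vert z\Vert$ factor using boundedness of $\kS$), integrates $\dot L$ directly, and applies the integral form of Gronwall to $\Vert z\Vert^{2}$; you instead keep the $\Vert z\Vert$ factor, pass to $u=\sqrt{L}$, and apply Gronwall to the resulting linear scalar inequality for $u$. Both give the same $\mathcal O(\epsilon^{-2})$ time scale and exponentially small bound on $\Vert z\Vert$.
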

\bpr
By assumption $A_0(w)$ is positive definite  and hence, 
 since by Theorem \ref{t.main}   we have $\Vert a-a_0\Wert_\nu = \kO(\delta_0)$,
 so is $A(w)$   for $\delta_0$ small. Therefore there exist constants ${\lambda_1}{>0}$ and ${\lambda_2}{>0}$ so that
\begin{align}
  {\lambda_1} \Vert z \Vert^2 \le \ell(w,z) \le {\lambda_2} \Vert z \Vert^2\label{e.lmin}
\end{align}
for $\|z\|$ small and all $w \in \kV +\i\nu$.
Differentiating $\ell(w,z)$ in $t$ we then obtain
\begin{align*}
\dot \ell(w(t), z(t)) &= \partial_z \ell J_{\kZ}(  r+L z+ a z + \nabla_z   f) 
+\epsilon \partial_w \ell  J_{\kW}\nabla_w H \\
& = \langle L z+a(w) z + \nabla_z  f,  J_{\kZ}(   r+L z+a(w) z + \nabla_z f) \rangle 
+\epsilon \partial_w \ell J_{\kW}\nabla_w H \\
&= \langle L z  + a(w) z +\nabla_z   f,  J_{\kZ} r \rangle +\epsilon \partial_w \ell J_\kW\nabla_w  H\\
&\le C_3  e^{-C_2/\epsilon} +C_4\epsilon \sup_{w {\in \kV+\i \nu}} \ell(w,z(t)),
\end{align*}
for some constants $C_3$ and $C_4$
as long as $(w(t),z(t))\in \kV\times \kS$. Here we have used that $\langle z_1, J_{\kZ} z_1\rangle =0$ for all $z_1$ and a Cauchy estimate on $\partial_w \ell$ and $\partial_w H$.  Note that $\dot \ell(w(t), z(t))$ is defined at all $z(t) \in \kS$.
Integrating this inequality from $s=0$ to $t$ and using \eqref{e.lmin} we find that any initial
data $z(0)=0$
\begin{align*}
  {\lambda_1}\Vert z(t)\Vert^2 \le \ell(w(t),z(t)) & \le C_3  t \e^{-C_2/\epsilon} 
 +C_4\epsilon \int_0^t \sup_{w\in \kV+\i\nu} \ell(w,z({s})) \d {s}\\
& \le C_3  t e^{-C_2/\epsilon} +C_4 {\lambda_2}\epsilon \int_0^t \Vert z{(s)}\Vert^2 \d {s}.
\end{align*}
We have here used that $\ell(w(t), z(t))|_{t=0}=0$ since $z(0)=0$ by assumption. Then by Gronwall's inequality in integral form \cite{Bellman} we obtain
\begin{align*}
 \Vert z(t)\Vert^2 \le C_3{\lambda_1}^{-1}t e^{-C_2/\epsilon}e^{C_4   {\lambda_2}  {\lambda_1}^{-1}\epsilon t},
\end{align*}
and therefore while $0\le t\le C_2 {\lambda_1}/(2C_4  {\lambda_2} \epsilon^2)=c_2/\epsilon^2$ we have
\begin{align*}
  \Vert z(t)\Vert\le \sqrt{\frac{C_3 C_2}{2C_4  \lambda_2}} e^{-C_2/(4\epsilon)}/\epsilon = \kO(\e^{-c_1/\epsilon}) ,
\end{align*}
where $c_1< C_2/4$,
completing the proof.
\epr
Note that this upper estimate $\kO(\epsilon^{-2})$ on the time interval is large, even on the slow time scale $\tau= \epsilon t$.

%%%%%%%%%%%%%%%%%%%%%%%%%%%%%%%%%%%%%%%%%%%%%%%%
%%%%%%%%%%%%%%%%%%%%%%%%%%%%%%%%%%%%%%%%%%%%%%%%

 \subsection{Examples}\label{ss.Examples}
 In this section we present our two main examples.
 We consider PDEs with periodic boundary conditions, i.e., on the circle $\S^1 \simeq \R/(2\pi\Z)$.  We frequently use
 the Hilbert space $\kL^2(\S^1; \C^d)$ of square integrable functions with inner product
  \[
  \langle x_1, x_2 \rangle_{\kL^2(\S^1, \C^d)}
  = \int_0^{2\pi} x_1(s) \cdot \overline{x_2(s)} \d s
  \]
  and the Sobolev spaces $\kH_k(\S^1; \C^d)$ as  the  spaces containing the $\kL^2(\S^1; \C^d)$ functions with $k$ weak derivatives equipped with the $\kH_k$ inner product
  \begin{equation}\label{e.HkProduct}
  \langle x_1,x_2\rangle_{\kH_k(\S^1; \C^d)} = \langle (1-\partial_s^2)^k x_1,x_2\rangle_{\kL^2(\S^1; \C^d)}.
  \end{equation}
  The examples are also used later in Remark \ref{r.genFct} to exemplify an abstract construction related to the introduction of an generating function. 

\subsubsection{Nonlinear Schr{\"o}dinger equation}
\label{ssec:nse}
Consider the nonlinear Schr\"odinger equation
\begin{equation}
  \label{e.nse}
  \i \, \partial_t z
  = \partial^2_{s} z -z+ \partial_{\overline{z}} U (w,z,\overline{z}),
\end{equation}
on the circle $\S^1$
coupled to a slow system
\[
\dot w = \epsilon J_\kW \left(\nabla_w h(w) +\int_0^{2\pi} \tfrac12\nabla_w U(w,z,\overline{z}) \d s\right),
\]
where $h:\kV+\i\nu_0 \to\C$ is real analytic with $\nu_0>0$
and $\kV\subset \kW$ open.
Furthermore, using the real coordinates $(x,y)\in \C$, 
where  we identify $\R^2 \simeq \C$  via $z = x + \i y= (x,y)$,
we assume that   $\sU(w,x,y) = U(w,x+\i y, x- \i y)$ is analytic in $w\in \kV+\i\nu_0$ and  $x=\Re z$ and $y=\Im z$. 

In the coordinates $(x,y)$ the nonlinear Schr\"odinger equation \eqref{e.nse}
takes the form
\begin{equation}\label{e.Rnse}
 \dot x = \partial_s^2 y  -y+\tfrac12 \nabla_y \sU(w,x,y) ,\quad \dot y =  -\partial_s^2 x +x- \tfrac12 \nabla_x \sU(w,x,y).
\end{equation}
This follows from the fact that 
\[
\partial_{\bar z}U(w,z,\bar z) = \tfrac12 \partial_x \sU(w,x,y)  +  \tfrac\i2 \partial_y \sU(w,x,y).
\]
We can rewrite \eqref{e.Rnse} as 
\[
{\dot x \choose \dot y} = J \nabla H(w,x,y).
\]
Here  $\nabla = \nabla^{\kL^2(\S^1;\R^2)}$  is the gradient w.r.t. the $\kL^2(\S^1;\R^2)$
pairing,  
\begin{equation}
\label{e.JL2}
J = J_{\kL^2(\S^1;\R^2)}=\begin{pmatrix}
                            0 & \id_{\kL^2}\\
                            -\id_{\kL^2} &0 
                           \end{pmatrix},
\end{equation}
{with $\id_{\kL^2}$ the identity on $\kL^2 = \kL^2(\S^1;\R)$},
is the standard symplectic structure { operator on $\kL^2 \times \kL^2$} and  
\begin{equation}
  \label{e.HamiltonianNSE}
  H(w,x,y) = \frac{1}{2} \int_{\S^1} 
         \bigl( 
           \lvert \partial_s x \rvert^2 +\vert \partial_s y \rvert^2 +x^2+y^2+ \sU(w,x,y)
         \bigr) \, \d s+h(w).
\end{equation}
Note that 
 the Hamiltonian
 $H$ is well defined on $\kZ := \kH_1(\S^1;\R^2)$, but not on
$\kL^2(\S^1;\R^2)$.  Defining { the symplectic form $\omega_{\kZ}(z_1,z_2)$
on $\kZ$ as}
\begin{align}
  \label{e.J_nls}
\omega_{\kZ}(z_1,z_2) =  \langle J_\kZ^{-1} z_1, z_2 \rangle_\kZ
  & =  \int_0^{2\pi}( x_1(s) y_2(s) - y_1(s) x_2(s)) \d s
   = 
   \omega_{\kL^2\times \kL^2}( z_1, z_2),
\end{align}
{ where
\begin{equation}\label{e.omegaL2}
 \omega_{\kL^2\times \kL^2}( z_1, z_2)= \langle J^{-1}  z_1, z_2 \rangle_{\kL^2(\S^1;\R^2)}
\end{equation}
is the standard symplectic form on $\kL^2\times \kL^2$}, 
we see { from \eqref{e.HkProduct}} that  $J_\kZ = (1-\partial^2_x) J$ so that  $J_\kZ^{-1} \colon
\kH_2(\S^1;\R^2)\to \kL^2(\S^1;\R^2)$.   The Laplacian is diagonal
in the Fourier representation with eigenvalues
$-k^2$.  Hence, $ \spec\, J_\kZ = \{ \pm \i( k^2+1) \colon k \in \Z\}$ so that $J_\kZ$
generates a unitary group on $\kL^2(\S^1;\R^2)$ and on $\kZ
=\kH_1(\S^1,\R^2)$. This  shows that  
the Hamiltonian of the nonlinear Schr\"odinger equation \eqref{e.nse}
takes the form \eqref{e.HamInf} with  
  \[
 L = \id, \quad   V(w,x,y) =\frac12 \int_0^{2\pi} \sU(w,x(s), y(s))\d s  
\]
where $z \in \kZ$. The fact that $H$ and $B= J_\kZ\nabla V$
are analytic as maps from $(w,z) \in (\kV+\i \nu)\times(\kS+\i\sigma)$
to $\C$ and $\kZ$ respectively can be deduced from the fact 
 that $\kH_1(\S^1;\R)$ is an algebra, see   \cite[Theorem 5.23]{Adams}.
Moreover 
\[
a_0(w) =\tfrac12 (1-\partial_s^2)^{-1} \partial_z^2 \sU(w,0).
\]
If $\partial_z^2 \sU(w,0)$
is small then $A(w) = \id + a_0(w)$
is invertible on $\kZ_1= \kH_3(\S_1;\R^2)$ as required in (H2).  
Under this assumption conditions (H0-H3) are satisfied.
By choosing $\partial_z \sU(w,0)$  sufficiently small  we can  make
$\delta_0$ sufficiently small as required  in Theorem \ref{t.main}.
%%%%%%%%%%%%%%%%%%%%%%%%%%%%%%%%%

\subsubsection{Semilinear wave equation}
\label{ssec:swe}

Consider a semilinear wave equation of the form
\begin{subequations}
\label{e.swe}
\begin{equation}\label{e.swe1}
\dot x = y,\quad
 \dot y =(\partial_s^2-1) x - \nabla_x U (u,x) 
\end{equation}
with $z=(x,y)$ and where $x=x(t,s)$, $y=y(t,s)$ with $s\in \S^1$, for simplicity. This system is coupled to a set of slow ordinary differential equations for the evolution of $w=(u,v)=(u,v)(t)\in \kW =  \R^{2\d_{\kW}}$ of the form
\begin{equation}\label{e.swe2}
\dot u  =\epsilon v,\quad
 \dot v = -\epsilon \left(\nabla_u h(u) + \int_0^{2\pi} \nabla_u U(u,x)\d s\right).
\end{equation}
\end{subequations}
{ We consider \eqref{e.swe} on $\kW \times \kZ$ where}
$\kZ=\kH_1(\S^1;\R)\times \kL^2(\S^1;\R)$, 
with inner product 
\begin{align*}
 \langle z_1,z_2\rangle  &  = \langle x_1,x_2\rangle_{\kH_1(\S^1;\R)} + \langle y_1,y_2\rangle_{\kL^2(\S^1;\R)},\quad
  z_1=\begin{pmatrix} x_1\\y_1\end{pmatrix},\,z_2=\begin{pmatrix} x_2\\y_2\end{pmatrix}.
\end{align*} 
  This system is Hamiltonian with
 Hamiltonian and symplectic structure operator given by 
\begin{align*}
 H(w,z) &= \frac12 \vert v\vert^2+\frac12 \langle z,z\rangle  + h(u)+\int_0^{2\pi} U (u,x)\d s,\\
 J_\kZ z &= {
        y\choose
        \partial_s^2x- x
       },
\end{align*}
and  \[
L = \id,\quad
 V(u,x) = \int_0^{2\pi} U(u,x) \d x, \quad 
B(u,x) = { 0 \choose  - \partial_x U(x,y)}.
\]

{ Note that $J_\kZ$ defines the symplectic form 
$\omega_\kZ(z_1, z_2) = \langle  J_{\kZ}^{-1}  z_1,z_2\rangle_\kZ $
on $\kZ$ which (as in the case of the nonlinear Schr\"odinger equation, cf. \eqref{e.J_nls}) satisfies
\begin{equation}\label{e.omegaSWE}
 \omega_\kZ(z_1,z_2) = \omega_{\kL^2 \times \kL^2}(z_1,z_2),\quad (z_1,z_2)\in \kZ\times \kZ.
\end{equation}
Here $\omega_{\kL^2 \times \kL^2}$ is the standard symplectic form on $\kL^2 \times \kL^2$, see \eqref{e.omegaL2} and \eqref{e.JL2}. To prove \eqref{e.omegaSWE} note that
 for $z_1, z_2 \in \kZ$
with $z_i = (x_i, y_i)$ and $x_i \in \kH_1$, $y_i \in \kL^2$, $i=1,2$, we have
\begin{align*}
\langle  J_{\kZ}^{-1} z_1, z_2\rangle_\kZ &= \langle {(\partial_s^2-1)^{-1} y_1 \choose x_1}  , {x_2\choose y_2}\rangle_{\kZ} =
 \langle x_2, (\partial_s^2-1)^{-1} y_1\rangle_{\kH_1}  + \langle x_1,  y_2\rangle_{\kL^2}\\
 &  = 
 \langle (1-\partial_s^2)  (\partial_s^2-1)^{-1}  x_2, y_1\rangle_{\kL^2}  +  \langle x_1,  y_2\rangle_{\kL^2}\\
& = 
 \langle x_1, y_2\rangle_{\kL^2} -\langle x_2, y_1\rangle_{\kL^2}
  = \omega_{\kL^2 \times \kL^2}(z_1, z_2).
\end{align*}
Here we used that $(\partial_s^2-1)^{-1}$ is self-adjoint on $\kH_1$ and the definition \eqref{e.HkProduct} of the inner product on $\kH_1$.}

We assume that $U:\R^{\d_{\kW}+1} \rightarrow \R$ is an analytic function. Then $V$ and $B$ are also analytic as functions from $\R^{d_{\kW}}\times \kH_1(\S^1;\R)$ to $\R$ and $\kZ$ respectively, which follows  from the theory of superposition operators, see   \cite[Theorem 5.23]{Adams},
and so the system \eqref{e.swe} is well-posed on  $\kW\times \kZ$ \cite{P83}.
Finally
\begin{align*}
 D(J_\kZ) =\kH_2(\S^1;\R)\times \kH_1(\S^1;\R).
\end{align*}
Then we note that due to  the definition  { \eqref{e.grad}}  of $\nabla_z H$ we have for all $\tilde z \in \kZ$ that
\[
 \partial_z H(w, z)\tilde z = \langle \nabla_z H(w, z),\tilde z \rangle 
\]
and hence, due to 
\[
\langle \nabla_x U(u,x), \tilde x \rangle_{\kL^2(\S^1;\R)} = \langle (1-\partial_x^2)^{-1}\ \nabla_x U(u,x), \tilde x \rangle_{\kH_1(\S^1;\R) }
\]
we get
\begin{align*}
 \nabla_z H
  = {x+  (1-\partial_x^2)^{-1}
  \nabla_x U
 \choose        y }.
\end{align*}
% Here $\nabla_x U = \nabla_x^{\kZ
The assumptions on analyticity (H0-H3) are  satisfied due to the analyticity assumption on $U$
 provided that  $\partial_x^2U(u,0)$ is sufficiently small. Note that 
the smoothing property in (H2), (H3) is  a consequence of the appearance of the isomorphism $(1-\partial_s^2)^{-1}:\kL^2(\S^1;\R)\rightarrow \kH_2(\S^1;\R)$ in the expression above. Moreover, $A_0(w_0)^{-1}$  exists and is bounded as an operator from $\kZ_1$ into $\kZ_1$ as required in (H2) if $\partial_x^2 U(u,0)$ is small so that $A_0(w_0) = \id + a_0(u_0)$ is a small $\kZ_1$-perturbation of the identity. By choosing $\partial_x U(u, 0)$ sufficiently small we can make $\delta_0$ small as required in Theorem \ref{t.main}.
%%%%%%%%%%%%%%%%%%%%%%%%%%%%%%%%%%%%

%%%%%%%%%%%%%%%%%%%%%%%%%%%%%%%%%%%%%
%%%%%%%%%%%%%%%%%%%%%%%%%%%%%%%%%%%%%
%%%%%%%%%%%%%%%%%%%%%%%%%%%%%%%%%%%%%

\section{Proof of  main result}\label{s.hamfin}

We start with some preliminary lemmas which will be needed in the proof.
Then, in Section \ref{ss.GenFct} we introduce the generating functions for the symplectic transformations we consider. 
In Section \ref{ss.symplTrafo} we prove that those symplectic transformations are well-posed. We then set up an iterative lemma (Section \ref{ss.ItLemma}) which we use to prove the main theorem in Section \ref{ss.ProofMain}.

\subsection{Preliminary lemmas}
We   frequently need the following Cauchy estimate \cite{pos1}:

\begin{lemma}\label{l.cest}
Assume that $\kW^\C$ and $\kZ^\C$ are Banach spaces, let $\kV^\C \subseteq  \kW^\C$ be open and assume that $f:\kV^\C\rightarrow \kZ^\C$ is analytic and that $f$ is bounded on $\kB_\nu(w_0)\subset \kV^\C$
 {for some $\nu>0$}. Then for $n \in \N$
\begin{align}
 \Vert \partial^n_w f(w_0)\Vert \le n! \frac{\sup_{w\in \kB_\nu(w_0) }\Vert f(w)\Vert}{\nu^n}.\label{e.cest}
\end{align}
\end{lemma}

\begin{remark}
Let $f:\kV+\i\nu\rightarrow \kZ^\C$ be analytic and bounded
and let $\nu>\xi>0$. Then we can apply the estimate \eqref{e.cest} to any $w_0\in \kV+\i(\nu-\xi)$ to obtain:
\begin{align*}
\sup_{w_0\in \kV+ \i(\nu-\xi)}\Vert \partial^n_w f(w_0)\Vert \le n! \frac{\sup_{w\in \kV+\i\nu}\Vert f(w)\Vert}{\xi^n},
\end{align*}
which we will write compactly as
\begin{align*}
\Vert \partial^n_w f\Vert_{\nu-\xi} \le n!\frac{\Vert f\Vert_{\nu}}{\xi^n}.
\end{align*}
This is the form of Cauchy's estimate that we will be using. 
\end{remark}

 We will also use the following generalized version of Taylor's theorem \cite{pos1}:
\begin{lemma}
 If $f:\kV^\C\rightarrow \kZ^\C$ is $n$ times continuously differentiable, $n\ge 1$, and if the segment $w+sh$, 
 $0\le s\le 1$, is contained in $\kV^\C$, then 
\begin{align*}
 f(w+h) &= f(w)+\partial_w f(w)(h)+\cdots +\frac{1}{(n-1)!}\partial_w^{n-1} f(w)h^{n-1}\nonumber\\
&+\int_0^1 \frac{(1-s)^{n-1}}{(n-1)!}\partial_w^n f(w+sh)(h,\cdots,h) \d s.%\label{e.Taylor}
\end{align*}
The integral remainder is bounded by $\frac{\Vert h\Vert^n}{n!}\sup_{0\le s \le 1} \Vert \partial_w^n f(w+sh)\Vert$.
\end{lemma}
Here, we write for $m \in \N$,
\begin{align*}
 \partial_w^m f(w)h^m = \partial_w^m f(w)(h,\ldots,h).
\end{align*}

To continue we introduce the following notation:
Under assumption (H0-H3) for $R>0$ such that $\kB_R^{\kZ^\C}(0) \subseteq \kS+\i\sigma_0$, define
 \begin{equation}\label{e.CF''}
  C''_{F_0}[\nu_0, R] := 
\sup_{\substack{ w_0\in \kV+\i\nu_0, \\ \|z_0\| \leq R}} \Vert \partial^2_z F_0(w_0, z_0)\Vert_{\kE(\kZ\times\kZ; \kZ_1)}.
 \end{equation}
 
We  need the following lemma, which deals with solutions of the equation $\dot z=0$,
 for the construction of improved slow manifolds:
 
\begin{lemma}\label{l.analimpl}
Assume (H0-H3) with the subscripts dropped   and assume that 
\begin{equation}
\delta <  2/(K^2 C''_{F}[\nu, K \delta])),\label{e.deltaCond}
\end{equation}
 and that $K \delta <   \sigma$.
Then  
\begin{align}
 0=r(w) + A(w)z+{F}(w,z),\label{e.xyeqn}
\end{align}
 has a locally unique solution $z=\zeta(w)\in \kZ_1^\C$ satisfying:
\begin{align*}
 \Wert \zeta(w)\Wert \le K \Wert r(w) \Wert,%\label{e.etax}
\end{align*}
for every $w\in \kV+\i\nu$. Moreover $\zeta(w)$ is analytic in $w\in \kV+\i\nu$:
 \begin{align*}
   \zeta&\in C^\omega(\kV+\i\nu;\kZ^\C_{1}).
  \end{align*}  
\end{lemma}

\bpr
  This is  just a consequence of the uniform contraction theorem, see e.g., \cite[Section 1.2.6]{Henry}, where  \eqref{e.deltaCond} is the condition to
ensure the contraction property. We include the proof to verify the estimates. 
Re-arranging \eqref{e.xyeqn}
and applying the inverse $A(w)^{-1}$ gives
\begin{align}
 \zeta(w)= -A(w)^{-1}(r(w)+{F}(w,\zeta )).\label{e.eta}
\end{align}
Put $\tilde\zeta(w)=-A(w)^{-1} r(w)$ and write $\zeta=\tilde\zeta(w)+z$
so that
\begin{align*}
   z = \Pi(w,z):= -A(w)^{-1}F(w,\tilde\zeta(w)+  z)=-(L+ a(w))^{-1}F(w,\tilde\zeta(w)+  z)\label{e.mueqn}
\end{align*}
for $w \in \kV+\i\nu$.
Note that $\Wert \tilde\zeta(w)\Wert\le \frac{K}{2}\Wert r(w)\Wert$ by (H2). 
We will denote this upper bound by $\rho(w)=\frac{K}{2} \Wert r(w)\Wert$ and highlight that $\rho(w) \le \frac{K}{2}\delta$. 
 We will show that $\Pi(w,\cdot)$ is a 
contraction on $\kB^{\kZ_1^\C}_{\rho(w)}(0)$ for each $w \in \kV+\i\nu$.
  Note that $\kB^{\kZ_1^\C}_{\rho(w)}(\tilde\zeta(w))\subset \kS+\i\sigma$ because $\sigma> K \delta> 2\rho(w)$
   by assumption. By Taylor's formula we have for $\|z\|\leq K\delta $ that
\[
\Wert{F}(w,z)\Wert=\Vert\int_0^1(1-s)\partial_z^2 {F}(w,sz)z^2 \d s\Wert
\le \frac{1}{2} C_F''[\nu, K\delta] \|z\|^2.
\] 
Therefore for $z\in \kB^{\kZ_{1}^\C}_{\rho}(0)$ and $w \in \kV+\i\nu$
\begin{align*}
 \Wert \Pi(w,z)\Wert 
 \le \frac{K}{4}  C_F''[\nu, K\delta] \Vert \tilde\zeta(w)+z\Vert^2 
\le \frac{K}{4} C_F''[\nu, K\delta] (2\rho)^2
\le K^2C_F''[\nu, K\delta] \delta \rho/2 <\rho
\end{align*}
using that $2\rho \le K\delta$ and  assumption \eqref{e.deltaCond}, 
and hence $\Pi(w,\cdot):\kB^{\kZ_{1}^\C}_\rho(0) \rightarrow \kB^{\kZ_{1}^\C}_\rho(0)$.
 Next,  by Taylor's formula 
 $$\partial_z {F}(w,z)=\int_0^1\partial_z^2 {F}(w,tz)z \d t,$$
from which we obtain that for $z\in \kB^{\kZ_{1}^\C}_{\rho}(0)$
  \[
  \Wert \partial_z {F}(w,\tilde\zeta(w)+z)\Wert \le C_F''[\nu,K\delta] \Vert \tilde\zeta(w)+z\Vert
  \leq C_F''[\nu,K\delta] K \delta.
  \]
  Therefore for $z\in \kB^{\kZ_1^\C}_\rho(0)$
\begin{align*}
 \Wert \partial_z \Pi(w,z)\Wert %&\le \\
&\le{K^2} C_F''[\nu, K\delta]\delta/2<1,
\end{align*}
using \eqref{e.deltaCond}. This shows that $\Pi(w,\cdot)$ is a contraction on the ball $\kB^{\kZ^\C_{1}}_\rho(0)$ and therefore there exists
 a unique fixed point $  z(w)$ of $\Pi(w,\cdot)$. In particular, $\zeta(w)=\tilde \zeta(w)+ z(w)$ solves \eqref{e.eta}
  and $\Wert \zeta(w)\Wert \le 2\rho=K\Wert r(w)\Wert$. By \cite[Section 1.2.6]{Henry} 
  the map $\zeta: \kV+\i\nu\to \kZ^\C_{1}$
  is analytic.
\epr

 %%%%%%%%%%%%%%%%%%%%%%%%%%%%%%%%%%%
 
\subsection{Generating functions}\label{ss.GenFct}
 The proof of Theorem \ref{t.main} is based on  successive symplectic transformations.
We will in the following consider   the Hamiltonian  $H(w,z)$
in place of $H_0(w_0, z_0)$ from \eqref{e.HamInf},  satisfying the assumptions (H0-H3), with subscripts removed,
on $(w,z)\in (\kV+\i\nu)\times (\kS+\i\sigma)$.
 As for the general case considered in Appendix \ref{a.genves} and explained
 in the introduction we improve the manifold $\kM_0 = \{z=0\}$  by solving $\dot z =0$
for $z = \zeta(w)$ using Lemma \ref{l.analimpl}. 
 We generate a symplectic transformation $\Psi$ from the non-canonical transformation $z=\zeta(w)+z_+$ through  a generating function introduced in the following lemma.

In this section we formally define $\Psi$ and show that it is symplectic. In the next section we will
show that $\Psi$ is well-defined.

\begin{lemma}\label{l.GenFct}
Assume (H0-H3) with subscript dropped.
Then there are projectors $\P_x$, $\P_y$  on $\kZ$ such that 
\[
\P_x+\P_y = \id,
\quad \P_x \P_y  =0,\quad \P_x \kZ \bot \P_y \kZ,
\]
 with the following property: 
Let  $z = (x,y)$ with $x = \P_x z$, $y=\P_y z$,  $w=(u,v)$ and let
\[
g(u,v_+,x,y_+)  =- \langle J_{\kZ}^{-1}\zeta(u,v_+),(x,y_+) \rangle_\kZ.
\] 
Then 
 $(u_+,v_+,x_+,y_+)\mapsto \Psi(u_+,v_+,x_+,y_+) = (u,v,x,y) $ formally defines 
{a symplectic transformation} given implicitly by the equations:
\begin{align}
  x_+ = x-\zeta^x(u,v_+),&\quad y =y_++\zeta^y(u,v_+),\notag\\
u_+ = u+\epsilon \nabla_{v_+} g(u,v_+,x,y_+),&
\quad v=  v_++\epsilon \nabla_u g(u,v_+,x,y_+).
\label{e.G-psi}
\end{align}
\end{lemma}

 \bpr
 To construct the generating function  we first define a Hilbert space $\tilde \kZ = \tilde\kX\times \tilde\kX$ such that \eqref{e.HamPDE}
 is Hamiltonian on  $\tilde \kZ$ with respect to the standard symplectic structure matrix
 \begin{equation}\label{e.JTildeZ}
J_{\widetilde\kZ}= \begin{pmatrix}
  0 & \id_{\tilde\kX}\\
  -\id_{\tilde\kX} & 0
  \end{pmatrix}.
\end{equation}
We claim that
 $\tilde \kZ = \kZ_{-1/2}$ is the dual space of $\kZ_{1/2} = D(|J_\kZ|^{1/2})$ w.r.t.~the $\kZ$ pairing, so that
  the  inner products on $\widetilde\kZ$  and $\kZ$ are related as follows: 
 \[
 \langle z_1, z_2\rangle_{\kZ} = \langle  |J_\kZ| z_1, z_2\rangle_{\widetilde\kZ}.
 \]
 To define $\tilde\kX$ we proceed as follows:
 We write
 \[
 J_\kZ= \int_{\lambda \in \spec( J_\kZ)} \lambda \d \P_{\lambda}
 \]
 where $\d \P$ is the projection valued spectral measure of $J_\kZ$, see e.g., \cite[Theorem VIII.8]{ReedSimon}, noting that $\i J_\kZ$ is self-adjoint on $\kZ^\C$.
Polar decomposition gives   
\begin{equation}\label{e.polarDec}
J_{\kZ}= J_{\tilde\kZ} |J_\kZ| .
\end{equation}
 Here
 \[
   |J_\kZ| = \int_{ \i\omega \in \spec( J_\kZ)} |\omega| \d \P_{\i \omega}
 \]
 is  positive, self-adjoint and densely defined on $\kZ $ and all three operators commute. 
 Note that $ J_{\tilde\kZ}$ is by construction both skew-symmetric and unitary.
 Hence, $\spec( J_{\tilde\kZ}) = \{\i, -\i\}$. 
 
 We will now first find spaces $\kX$ and $\kY\simeq \kX$ of $\kZ = \kX\oplus \kY$ such that
 $ J_{\tilde\kZ}$, when restricted to $\kZ \subseteq\tilde\kZ$ takes the form
 \eqref{e.JTildeZ}. Then  we define 
 $\tilde \kX$ and $\tilde \kY\simeq \tilde\kX$ as  dual spaces of $\kX_{1/2} = D(|J_\kZ|^{1/2}) \cap \kX$  and $\kY_{1/2} = D(|J_\kZ|^{1/2}) \cap \kY$  w.r.t.~the $\kX$ and $\kY$ inner product respectively and conclude that  \eqref{e.JTildeZ} also holds on $\tilde \kZ$, the closure
 of $\kZ$ in the $\tilde \kZ$-norm.
 
 To define $\kX$, let  $\P_\pm$ be the orthonormal spectral projectors 
 onto the eigenspaces   $\kZ_\pm = \P_\pm \kZ^\C$  of $J_{\tilde\kZ}$ to its
 eigenvalues $\pm \i$.  Then $\kZ_+\simeq \kZ_-$ and so there is an isomorphism  $\iota:\kZ_-\to \kZ_+$ which we define as follows: let
 $\{e_j, j \in I\}$ be  an orthonormal basis for $\kZ_+$. Then
 $\{\bar e_j, j \in I\}$ is an orthonormal basis for $\kZ_-$
 and we define $\iota: \kZ^\C\rightarrow \kZ^\C$ by   
 \begin{align}
 \iota z = \sum_{j \in I }  \langle z, e_j \rangle_{\kZ^\C} \bar e_j  + 
 \langle z, \bar e_j \rangle_{\kZ^\C}  e_j.\label{e.iota}
 \end{align}
   Note that $\iota e_k = \bar e_k$, $\iota\bar e_k = e_k$ so that $\iota^2 = \id$ and $\iota^* =\iota$. (Technically, the isomorphism $\iota:\kZ_-\to\kZ_+$ above is the restriction of \eqref{e.iota} to $\kZ_-$.)
 We  set $\P_x = \frac{1}{ 2} ( \id+ \iota)$ and 
  $\P_y = \frac{1}{2}(\id-\iota)$. We then have  $\P_x+\P_y = \id$ and
 $\P_x\P_y =0$.
 Moreover we readily check  that $\kX\bot \kY$, 
 where  $\kX =\P_x  \kZ $ and $ \kY= \P_y \kZ$.
  Finally $\kX \simeq \kY$ because $\i(\P_+-\P_-)$
 is an isomorphism mapping $\kX$ to $\kY$ and $\kY$ to $\kX$. This follows from
 the fact that 
 \[
 \P_x\i(\P_+-\P_-) \P_x =\P_y \i(\P_+-\P_-)\P_y=0
 \]
 which is straightforward to check, using that  $\iota \P_\pm \iota = \P_\mp$ and  that $\iota \P_\pm = \P_\mp\iota$.

It remains to prove that the
symplectic structure operator $J_\kZ$ transforms to the operator $J_{\tilde\kZ}$  on $\tilde\kZ$, i.e.,  that \eqref{e.HamInf} is Hamiltonian on $\tilde\kZ$ with symplectic structure operator $J_{\tilde\kZ}$. This holds true  provided that
 \begin{equation*}%\label{e.tildeZ}
  J_{\tilde\kZ } \nabla_z^{\tilde\kZ }
 =  J_{\kZ} \nabla_z
 \end{equation*}
  and  this follows from 
 \begin{align*}
\nabla_z^{\widetilde\kZ} =  |J_\kZ|\nabla_z^{\kZ}
\end{align*}
which can be proved as follows:
 Let $f:\kZ\rightarrow \R$ be differentiable so that
 \begin{align*}
  \d f(z)(\delta z) = \langle \nabla_z^{\kZ} f(z),\delta z\rangle_{\kZ}.
 \end{align*}
Since $\langle z_1,z_2\rangle_{\kZ} = \langle |J_\kZ|z_1,z_2\rangle_{\widetilde\kZ}$, we have
\begin{align*}
 \langle \nabla_z^{\kZ} f(z),\delta z\rangle_{ \kZ}= \langle |J_\kZ|\nabla_z^{\kZ} f(z),\delta z\rangle_{\widetilde\kZ},
\end{align*}
and therefore 
\begin{align*}
 \nabla_{z}^{\widetilde\kZ} f(z) = |J_\kZ|\nabla_z^{\kZ} f(z).
\end{align*}
The generating function we consider is then
 \begin{align*}
  G(u,v_+,x,y_+) =  \langle x, y_+ \rangle_{\tilde\kX}  + \epsilon^{-1}\langle u, v_+\rangle +g(u,v_+,x,y_+),%\label{e.Ggen}
\end{align*}
with  
\begin{align*}%\label{e.gInf}
g(u,v_+,x,y_+)&=\langle  J_{\widetilde\kZ}\zeta(u,v_+),(x,y_+) \rangle_{\widetilde\kZ} = \langle  |J_\kZ|^{-1}   J_{\widetilde\kZ}\zeta(u,v_+),(x,y_+) \rangle_\kZ \nonumber
\\
&  =- \langle J_{\kZ}^{-1}\zeta(u,v_+),(x,y_+) \rangle_\kZ,
\end{align*}
using \eqref{e.polarDec}.
The corresponding symplectic transformation defined as
 \[
 x_+ = \nabla_{y_+}^{\tilde\kX} G,\quad y=\nabla_{x}^{\tilde\kX}G, \quad u_+ =\epsilon \nabla_{v_+} G,\quad   v=\epsilon \nabla_{u} G 
 \]
 then gives the desired result.
\epr

\begin{remark}\label{r.genFct}\rm
In the following we will illustrate the abstract construction used in Lemma \ref{l.GenFct} by re-considering our two examples from Section \ref{ss.Examples}. 

In the case of the nonlinear Schr\"odinger equation, see Section \ref{ssec:nse},
we have $\kZ = \kH_1(\S^1;\R^2)$ and $J_\kZ = (1-\partial_s^2)J$  (with $J$ the standard
symplectic structure operator, { see \eqref{e.JL2}}) so that 
$\kZ_{1/2} = D_{\kH_1(\S^1;\R^2)}( |\partial_x|) = \kH_2(\S^1;\R^2)$ and $\tilde \kZ = \kZ_{-1/2}$,
the dual space of $\kZ_{1/2}$ w.r.t. the $\kZ$ inner product is $\tilde\kZ = \kL^2(\S^1;\R^2)$.
In this case $\kX = \kH_1(\S^1;\R)$ and $\tilde \kX = \kL^2(\S^1;\R)$.

In the case of the semilinear wave equation, see Section \ref{ssec:swe},  we have
$\kZ = \kH_1(\S^1;\R) \times \kL^2(\S^1;\R)$, $\kZ_{1/2} = \kH_{1.5}(\S^1;\R) \times \kH_{0.5}(\S^1;\R)$ 
and $\tilde\kZ = \kH_{0.5}(\S^1;\R)\times \kH_{-0.5}(\S^1;\R)$.  In this case let
\[
e_{\pm k}(s) = \frac{\e^{\i k s}}{2\sqrt \pi} { \frac{\pm 1}{\sqrt{k^2+1}} \choose \i},\quad k \in \N_0.
\]
Then $\kZ_+$ is spanned by $e_k$  and $\kZ_-$ by $e_{-k}$, $k \in \N_0$.
Moreover 
\[
e_k^r(s)= \sqrt 2\Re e_k(s) = \frac{1}{ \sqrt{2\pi}} {\frac{\cos ks}{\sqrt{k^2+1}} \choose -\sin ks},\quad
e_k^i(s)= \sqrt 2\Im e_k(s)  = \frac{1}{ \sqrt{2\pi}} {\frac{\sin ks}{\sqrt{k^2+1}} \choose \cos ks},
\]
and so one  copy of $\kX$ has orthonormal basis  $\{e_k^r, k \in \N_0\}$, the other, corresponding to $\kY$,
has orthonormal basis  $\{e_k^i, k \in \N_0\}$, both endowed with the $\kZ$ inner product. Moreover the two isomorphic copies of  $\tilde\kX$ are again spanned by $e_k^r$ and $e_k^i$, $k \in \N_0$, respectively, but now endowed with the $\tilde \kZ$ inner product.  Note that the decomposition $z=(x,y)$ for the semilinear wave equation used
 in the definition of the symplectic transformation, see \eqref{e.G-psi},
  is such that $x\in \kX$, $y \in \kY$, and hence not the natural decomposition
 where $\kZ = \kH_1(\S^1;\R)\times\kL^2(\S^1;\R)$ and $x \in \kH_1(\S^1;\R)$, $y \in \kL^2(\S^1;\R)$.  In this example we can identify $\kX$ with the space of square summable sequences $\ell_2(\N_0;\R)$ by identifying  each $x \in \kX$
with its  component vector
 with respect to the orthonormal basis  $\{e_k^r, k \in \N_0\}$, and similarly for $\kY$.
 We can then identify $\tilde\kX$ with the space of sequences endowed with inner product with weight $1/(k^2+1)^{1/4}$ for the $k$th component. 
 
It is important to highlight that the space $\tilde \kZ$ is only used in the proof of the above lemma for the construction of the generating function. We will not refer to it further.
\end{remark}

%%%%%%%%%%%%%%%%%%%%%%%%%%%%%%%%%%%%%%
\subsection{Symplectic transformations}\label{ss.symplTrafo}
In this section we prove that the symplectic  transformation $(w_+,z_+)\mapsto \Psi(w_+, z_+) = (w,z)$   defined  
implicitly by the equations \eqref{e.G-psi} is well-defined. 
We will write the transformation $\Psi$ as 
\begin{align}\label{e.alphaBeta}
 z= {z_+ +  \psi^z(w_+,z_+) =} z_++\zeta(u,v_+),\quad w=w_+ +\epsilon \psi^w(w_+,z_+),
\end{align}
with $z_+=(x_+,y_+)$ and $w_+=(u_+,v_+)$. We also define
$\psi = (\epsilon\psi^w, \psi^z)=\Psi-\id$.
As before, let $\zeta(w)=(\zeta^x(w),\zeta^y(w))$  
be  the solution of \eqref{e.xyeqn}.
In the following we let $\kZ^\C_{-1}$ be the dual to $\kZ^\C_{1}$ with respect to the pairing $\langle z_1, z_2\rangle = \langle z_1, \bar z_2 \rangle_{\kZ^\C}$. 

\begin{lemma}\label{l.sglem}
 Assume (H0-H3)  for $H(w,z)$. Let $\xi>0$ be such that $\nu-\xi>0$ and $\sigma-\xi>0$ and assume 
 \be\label{e.xiCond}
 \xi \geq \max( 2K\delta, 8 \epsilon  C_\kS), \quad 0\leq \epsilon\leq 1/4
 \ee
 where $\delta = \Wert r\Wert_\nu$ satisfies \eqref{e.deltaCond} and, as before,
 $C_\kS= \Vert z\Vert_{\sigma}$.
 Then $(w_+,z_+)\mapsto \Psi(w_+,z_+) =  (w,z)$ 
 is an analytic symplectic transformation from $(\kV+\i(\nu-\xi))\times (\kS+\i(\sigma-\xi))$ to 
  $(\kV+\i(\nu-\xi/2))\times (\kS+\i(\sigma-\xi/2))$. Moreover
  $\psi^z$ is analytic from $(\kV+\i(\nu-\xi))\times (\kS+\i(\sigma-\xi))$ into
  $\kZ_{1}^\C$, ${\psi^w}$, $\psi^z$ are also analytic in $z \in \kB^{\kZ_{-1}^\C}_{C_\kS-\xi}(0)$  and  
\begin{align}
   \Vert {\psi^z} \Wert_{\nu-\xi,\sigma-\xi}  \le \frac{\xi}{2},
   \quad  \epsilon \Vert {\psi^w} \Vert_{\nu-\xi,{\sigma-\xi}} \le \frac{\xi}{2},\label{e.zwpest}
\end{align}
where the norm $\Vert \cdot \Wert$ is defined in (H3).
\end{lemma}
\bpr
 The functions $\psi^z =\psi^z(w_+, z_+) $, $\psi^u = \psi^u(w^+, z^+)$
 and $\psi^v=\psi^v(w^+, z^+)$ are defined via the equations
    \begin{subequations}\label{e.betauv}
\begin{align}   
 \psi^z  & =    \zeta(u_++ \epsilon\psi^u ,v_+), \\
 \psi^u  & =\langle J_{\kZ}^{-1} \partial_{v} \zeta(u_++\epsilon \psi^u,v_+),
 \begin{pmatrix}
         x_++\zeta^x(u_++\epsilon \psi^u,v_+)\\
       y_+
      \end{pmatrix}\rangle,\label{e.psiu}\\
       \psi^v  & = -\langle J_{\kZ}^{-1} \partial_{u} \zeta(u_++\epsilon \psi^u,v_+),
       \begin{pmatrix}
          x_++\zeta^x(u_++\epsilon \psi^u,v_+)\label{e.psiv}\\
   y_+
    \end{pmatrix}\rangle.                                                                                                                                      
\end{align}
\end{subequations}
When $\epsilon\neq 0$ the function $\psi^u$ is implicitly defined via \eqref{e.psiu}. 
We rewrite this equation as $\phi= \Pi(\phi, w_+, z_+)$ where $\phi = \psi^u$ and show that $\Pi$ is a contraction
on $\kB^{\R^{d}}_{\eta}(0)$ where $\eta = \frac\xi{2\epsilon}$ and $d=d_\kW$.
Note that $\Pi(\phi, w_+, z_+)$ takes the form
\[
\Pi(\phi, w_+, z_+) = \Pi_0(\phi, w_+) + \langle \Pi_1(\phi, w_+ ), z_+\rangle
\]
and hence is affine in $z_+$. Moreover since $J_{\kZ}^{-1} \in \kE(\kZ;\kZ_1)$ we see that
$\Pi_1(\phi, w_+,z_+ ) \in \kZ_1^\C$ and hence that $\Pi$ is affine in $z_+ \in \kZ_{-1}^\C$.

We estimate using Lemma \ref{l.analimpl}, Cauchy estimates (Lemma \ref{l.cest}) and \eqref{e.xiCond} that
\begin{equation}
\|\Pi(\phi, \cdot,\cdot)\|_{\nu-\xi,\sigma-\xi} \leq  \|\partial_v\zeta\|_{\nu-\xi/2} C_\kS \leq 2   C_\kS K\delta/\xi 
\leq  C_\kS
\leq \frac{\xi}{8\epsilon}.
\label{e.PiEst}
\end{equation}
Hence $\Pi(\cdot, w_+, z_+)$ maps $\kB^{\R^{d}}_{\eta}(0)$ to itself 
for all $w_+ \in \kV +\i( \nu-\xi)$, $z \in \kS+\i(\sigma-\xi)$.
Moreover,
\begin{align*}
\partial_\phi \Pi(\phi, w_+, z_+) & =  \epsilon\langle J_{\kZ}^{-1} \partial^2_{uv} \zeta(u_++\epsilon \phi ,v_+),
 \begin{pmatrix}
         x_++\zeta^x(u_++\epsilon \phi,v_+) \\
       y_+
      \end{pmatrix}\rangle \\
      & + \epsilon \langle J_{\kZ}^{-1}\partial_{v} \zeta(u_++\epsilon \phi,v_+),
  \P_x\partial_u\zeta(u_++\epsilon \phi,v_+) \rangle
\end{align*}
and so,  again by Lemma \ref{l.analimpl}, Cauchy estimates and  \eqref{e.xiCond}
\begin{align}
\|\partial_\phi\Pi(\phi, \cdot, \cdot)\|_{\nu-\xi, \sigma-\xi} 
&\leq \epsilon \Vert \partial^2_{uv} \zeta\Vert_{\nu-\xi/2} C_\kS + 
 \epsilon\Vert \partial_{u} \zeta\Vert_{\nu-\xi/2} \Vert \partial_{v} \zeta\Vert_{\nu-\xi/2} 
\\
 & \leq \epsilon \left( \frac{8 K\delta C_\kS}{\xi^2}  +\left(\frac{2 K\delta}{\xi}\right)^2\right) \notag \\
&\leq  \frac{4  \epsilon C_\kS}{\xi} + \epsilon\leq \frac 12+\epsilon \leq \frac 34.
\label{e.PiContract}
\end{align}
Hence $\Pi$ is a contraction on $\kB^{\R^{d}}_{\eta}(0)$, and so by the contraction mapping theorem it has a unique fixed point $\psi^u(w_+, z_+)  := \phi(w_+, z_+)$ which
is analytic in $(w_+, z_+)  \in (\kV +\i( \nu-\xi)) \times ( \kS+\i(\sigma-\xi))$.
Note that estimates \eqref{e.PiEst}, \eqref{e.PiContract} also hold for $z_+ \in \kB^{\kZ_{-1}^\C}_{C_\kS-\xi}(0)$ which proves that $\psi^u$ is also analytic in  $z_+ \in \kB^{\kZ_{-1}^\C}_{C_\kS-\xi}(0)$. By the chain rule, the same applies for $\psi^v$ and
$\psi^z$.
\epr

\begin{cor}\label{c.alphaest}
 The estimate for $\Wert {\psi^z}\Wert_{\nu-\xi,{\sigma-\xi}}$ in \eqref{e.zwpest} can be improved to
 \begin{subequations}
\begin{align}\label{e.psizEst}
 \Vert {\psi^z}\Wert_{\nu-\xi,{\sigma-\xi}}\le \Wert \zeta\Wert_{\nu}\leq K \delta,
\end{align}
and we also have
\begin{equation}
 \Vert {\psi^w}\Vert_{\nu-\xi,{\sigma-\xi}}\le  4 C_\kS  K\delta/\xi.
 \label{e.psiwEst}
 \end{equation}
 \end{subequations}
\end{cor}
\bpr
 From Lemma \ref{l.sglem} we know that
 $(u\vert_{\kV + \i(\nu-\xi),\kS+\i(\sigma-\xi)},v_+)\subset \kV+\i(\nu-\xi/2)$. Hence,
\begin{align*}
\Vert {\psi^z} \Wert_{\nu-\xi,{\sigma-\xi}}= \Vert \zeta(u,v_+)\Wert_{\nu-\xi,\sigma-\xi}
\le \Wert \zeta\Wert_{\nu-\xi/2}\le \Wert \zeta\Wert_{\nu}.
\end{align*}
The other estimate follows as in the proof of Lemma \ref{l.sglem} for both $\psi^u$ and $\psi^v$
upon replacing $\eta$ by $2 C_\kS  K\delta/\xi$. 
\epr

Let  
\begin{align*}%\label{e.beta0}
 \psi_0 = \psi\vert_{z_+=0},\quad \psi_1 =\partial_z\psi\vert_{z_+=0},\quad \psi_2 =\partial_z^2\psi\vert_{z_+=0},
\end{align*}
and $\psi_i=(\psi_{i}^z,\psi_{i}^w)$, $i=0,1,2$. 
Then
\begin{subequations}
\begin{align}\label{e.beta0-est}
 \epsilon \Vert \psi^w_0 \Vert_{\nu-\xi} &= \Vert w_+-w(w_+,0)\Vert_{\nu-\xi} 
 \le \frac{\xi}2.
\end{align}
In the next estimate we also interpret $\psi_1^w$  and $\psi_2^w$ through 
\begin{align*}
\langle \psi_1^w,z\rangle = \partial_{z_+} \psi^w\vert_{z_+=0}z,&\quad  
\langle \psi_2^w z_{1},z_2\rangle = \partial_{z_+}^2 \psi^w\vert_{z_+=0}z_1 z_2,
\end{align*}
for all $z_1,z_2\in \kZ^{\C}$. 
Using Cauchy-estimates on $\psi^w_1=\partial_{z} \psi^w(w_+,0)$ and $\psi^w_2 = \partial_{z}^2 \psi^w(w_+,0)$, noting that $\psi^w$ is analytic in $z \in \kZ_{-1}^\C$ 
we obtain
\begin{align}\label{e.beta12-est}
 \Wert \psi^w_1\Wert_{\nu-\xi} \le \kappa^{-1} {\Vert\psi^w\Vert_{\nu-\xi,\sigma-\xi}},\quad
\Vert\psi^w_2\Wert_{\nu-\xi} \le {2\kappa^{-2}\Vert \psi^w \Vert_{\nu-\xi,\sigma-\xi}},
\end{align}
where $0<\kappa \leq \sigma-\xi$. 
Moreover 
\begin{align}\label{e.psiz1-est}
 \Vert \psi^z_1 \Wert_{\nu-\xi}
 \leq\frac{  K \delta}{\kappa}  \leq \sigma/\kappa, \quad \Vert \psi^z_2 \Wert_{\nu-\xi}
 \leq\frac{2  K \delta}{\kappa^2}.
 \end{align}
\end{subequations}

The fact that $(w_+,z_+)\mapsto (w,z)$ is well-defined with domain $(\kV+\i(\nu-\xi)) \times (\kS+\i(\sigma-\xi))$ 
and co-domain $(\kV+\i(\nu-\xi/2)) \times (\kS+\i(\sigma-\xi/2))$ was crucial here and will be in the following. 
The $\nu-\xi/{2}$ and $\sigma-\xi/2$ {terms} in the co-domains allow for a step of $\xi/2$ to 
apply Lemma \ref{l.cest} to estimate derivatives on $(\kV+\i(\nu-\xi/2)) \times (\kS+\i(\sigma-\xi/2))$ 
by function values on the larger domain $(\kV+\i\nu) \times (\kS+\i\sigma)$, cf. the Cauchy estimate 
\eqref{e.cest}.
This introduces a factor of $2\xi^{-1}$.  

%%%%%%%%%%%%%%%%%%%%%%%%%%%%%%%%%%%%%%%%%%%%%%%%%%%%%%%%%%%%%%%%%%%%%%%%%%%%%%%%%
\subsection{Iterative lemma}\label{ss.ItLemma}
We are now ready to state and prove an Iterative Lemma  which will is main ingredient
of the proof of Theorem \ref{t.main}.

\begin{lemma}\label{l.itHam}
 (\textbf{The Iterative Lemma for Hamiltonian systems}) 
Assume   (H0-H3)   for the Hamiltonian $H$, 
but relax the bounds on $\partial_w h$ and $F$ slightly, so 
 that $h$,  $a$, $r$, $f$ and $F$ satisfy
 \begin{subequations}
\begin{align*}
&\Vert h\Vert_{\nu} \le C_h, \,\Vert \partial_w h\Vert_{\nu-\xi/2}\le C_h^\prime,\\
&\Vert a \Wert_{ \nu}\le C_a,\, \Wert (L +a)^{-1}\Wert_{\nu}\le {K}/{2},\\
&\Vert f\Vert_{\nu, \sigma}\le C_f , \Vert F\Wert_{\nu, \sigma-\xi/2}\le C_F, \\
&\Wert r\Wert_{\nu}\le \delta.
\end{align*}
Here       $\delta>0$ satisfies \eqref{e.deltaCond} and 
\begin{equation}\label{e.ItLeHam-conds}
\min(\nu,\sigma) > \xi\ge \max\{ 8 C_\kS \epsilon ,2K\delta\}, ~~ 0\leq  \epsilon\leq \frac 14,
\end{equation}
where $C_\kS=\|z\|_{\sigma}$ as before.
 Then the symplectic transformation $\Psi:(w_+,z_+)\mapsto (w,z)$ from Lemma \ref{l.sglem} mapping 
 $(\kV+\i(\nu-\xi))\times (\kS+\i(\sigma-\xi))$ into $(\kV+\i(\nu-\xi/2)) \times (\kS+\i(\sigma-\xi/2))$ 
 transforms $H=H(w,z)$  into
\begin{align}
  H_+(w_+,z_+) ={h_+}(w_+)+ \langle r_+(w_+), z_+\rangle
   +\frac12 \langle (L+ a_+(w_+)) z_+,  z_+\rangle
   +{f_+}(w_+,z_+).\label{e.Hamp}
\end{align}
 Here  $f_+ = \kO(\|z_+\|^3)$,
 \begin{align*}
 a_+ \in \kC^\omega(\kV+\i(\nu-\xi);\kE(\kZ^{\C};\kZ^\C_{1})),
\end{align*}
and
 \begin{align*}
 F_+ = \nabla_z f_+ \in \kC^\omega(( \kV+\i(\nu-\xi)) \times (\kS + \i (\sigma-3\xi/2); \kZ^\C_{1}).
\end{align*}
Moreover there is a constant $c$  which is increasing in  $C_h'$, $K$, $C_F$,  $C_a$, $\delta$,  $C_H$, $C_\kS$ and $1/\kappa$, with $ \kappa$ satisfying
 $0<\kappa\leq \sigma-\xi$,  and  depends   continuously on those constants   only
such that 
\begin{align}\label{e.deltaHam-est}
 \delta_+ =\Wert r_+\Wert_{\nu-\xi} \le c\frac{\epsilon\delta }{\xi},
\end{align}
and 
\begin{align}\label{e.ham-it-ests}
\Vert {h_+}-h\Vert_{\nu-\xi}\le c \delta, &\quad
 \Vert \partial_{w_+}( h_+ - h)\Vert_{\nu- 3\xi/2} \le c\frac{\delta}{\xi},
\nonumber \\
\Vert {f_+}-f\Vert_{\nu-\xi,\sigma-\xi}\le c\frac{ \delta}{\xi},&\quad
 \Vert  {F_+} -F \Wert_{\nu-\xi,\sigma-3\xi/2} \le  c\frac{ \delta}{\xi^2}, \nonumber\\
 \Vert {a_+}-a\Wert_{\nu-\xi}\le c\frac{ \delta}{\xi}.
\end{align}
Furthermore,
\begin{align}\label{e.hamK+}
\Wert (L+ a_+)^{-1}\Wert_{\nu-\xi}\le {\frac{K_+}{2}} :=\frac{K}{2} 
+ c\frac{ \delta }{\xi},
\end{align} 
provided
\begin{equation}\label{e.K+}
c \delta < \xi.
\end{equation}
\end{subequations}
\end{lemma}

\bpr
We Taylor expand the new Hamiltonian 
$H_+(w_+,z_+)=H(w,z)$ around $z_+=0$ to put it into the form \eqref{e.Hamp} with
\begin{align*}		
 {h_+}(w_+) = H_+(w_+,0),\quad r_+(w_+) = \nabla_{z_+} H_+(w_+,0),\quad
\end{align*}
with
\[
{a_+}(w_+) = \partial_{z_+} \nabla_{z_+}H_+(w_+,0)-  L,
\quad
\]
and
\[ {f_+}(w_+, z_+)=H_+(w_+, z_+)-{h_+(w_+)}- \langle r_+(w_+),  z_+\rangle -\frac12  \langle (L+ a_+)z_+,  z_+\rangle.
\]
We  have for $w_+ \in \kV + \i(\nu-\xi)$
\begin{align}
|{h_+}(w_+) - h(w_+) |&= |H_+(w_+,0)  - H(w_+,0)|
= |H(w\vert_{z_+=0},z\vert_{z_+=0}) - H(w_+,0)|\notag\\
&\leq |H(w_+,0)-H(w\vert_{z_+=0},0) |\notag \\
& +|H(w\vert_{z_+=0},0) - H(w\vert_{z_+=0},\psi^z\vert_{z_+=0})| ,\nonumber\\
&\leq\epsilon \max_{s \in [0,1]}\|\partial_w h \circ (w_++ s\epsilon \psi_0^w)\|_{\nu-\xi}  \|\psi^0_w\|_{\nu-\xi}\notag\\
& +\max_{s\in [0,1]} \| \partial_z H \circ(w_++\epsilon \psi^w_0, s \psi^z_0) \|_{\nu-\xi} \|\psi^z_0\|_{\nu-\xi},\nonumber\\
&\leq\epsilon\|\partial_w h\|_{\nu-\xi/2}  \|\psi_0^w\|_{\nu-\xi}+ \| \partial_z H \|_{\nu-\xi/2,\xi/2} \|\psi^z_0\|_{\nu-\xi},\nonumber\\
&\leq  C_h'  \epsilon\|\psi_0^w\|_{\nu-\xi}+  \frac{ C_H}{\kappa} \| \psi^z_0\|_{\nu-\xi}
\leq 4  C_\kS C_h'  K\frac{\epsilon \delta}{\xi} +  \frac{ C_H K }{\kappa} \delta \notag \\
& \leq 
\left( C_h' + \frac{ C_H }{\kappa}\right)  K\delta \label{e.H+w+}.
\end{align}
Here we used the mean value theorem, Lemma  \ref{l.sglem}  and Corollary \ref{c.alphaest}, and in the last inequality we used that $\xi\geq 8 C_\kS  \epsilon$ by
\eqref{e.ItLeHam-conds}.

Moreover, note that \eqref{e.H+w+} and a Cauchy estimate give 
\begin{align*}
  \Vert \partial_w ({h_+}-h)\Vert_{\nu-3\xi/2} 
&\le  \frac{2}{\xi} \Vert {h_+-h}\Vert_{\nu-\xi}   \leq
 \left( C_h' + \frac{ C_H  }{\kappa}\right)\frac{2 K \delta}{\xi}.
\end{align*}
Let $\psi_0 = (\epsilon\psi^w_0, \psi^z_0)$. 
Then, using \eqref{e.alphaBeta}, \eqref{e.beta12-est} we obtain
 for $w_+ \in \kV + \i(\nu-\xi)$
\begin{align}
\Wert r_+(w_+)\Wert&=\Wert\nabla_{z_+} H_+(w_+,0)\Wert
=\Wert\nabla_{z_+} H(w,z)\vert_{z_+=0}\Wert \nonumber\\
&\leq 
\| \partial_z H(w\vert_{z_+=0},z\vert_{z_+=0})\frac{\partial z}{\partial z_+}\vert_{z_+=0}
 \|_{\kE(\kZ_{-1}, \C)} \notag\\
 & +\|\partial_w H(w\vert_{z_+=0},z\vert_{z_+=0}) \frac{\partial w}{\partial z_+}|_{z_+=0}\|_{\kE(\kZ_{-1}, \C)} \nonumber\\
 & \le \Wert\nabla_z H(w\vert_{z_+=0},\zeta(u\vert_{z_+=0}, v_+))\Wert (1+\Wert \psi_1^z\Wert_{\nu-\xi} )
 \notag \\
 & +\epsilon\|\partial_w H( w\vert_{z_+=0}, \psi^z_0)\| \Wert \psi_1^w \Wert_{\nu-\xi}. \label{e.rEst}
 \end{align}
  We estimate,  with \eqref{e.zwpest}, that 
 \[
\| (\partial_w H)\circ (w\vert_{z_+=0}, \psi^z_0) \|_{\nu-\xi} 
\leq \| \partial_w H \|_{\nu-\xi/2, \xi/2}.
 \]
 Then, using the mean value theorem and Cauchy's estimate,  gives
\begin{align*}
\| \partial_w H \|_{\nu-\xi/2, \xi/2  }  &\leq
\|  \partial_w h\|_{\nu-\xi/2} + \| \partial^2_{wz} H\|_{\nu-\xi/2,\xi/2} \cdot \frac{\xi}{2}\notag\\
 &  \leq
C_h' +\frac{ \|\partial_w H\|_{\nu-\xi/2, \sigma}}{\kappa} \cdot\frac{\xi}{2}
\leq C_h' + \kappa^{-1}C_H.%\label{e.d_w H}
 \end{align*}
 Moreover, since $\partial_z H(w,\zeta(w))=0$,
  using Cauchy's estimate and \eqref{e.psiwEst} we obtain
 \begin{align*}
 \Wert\nabla_z H&(w\vert_{z_+=0},\zeta(u\vert_{z_+=0}, v^+))\Wert
 \\
 & =   \Wert\nabla _z H(w\vert_{z_+=0},\zeta(u\vert_{z_+=0}, v^+))- \nabla_z H(w\vert_{z_+=0},\zeta(w\vert_{z_+=0}))\Wert \\
&\leq  \max_{\substack { \Wert z\Wert \leq\xi/2}} \|\partial_z \nabla_z H(w\vert_{z_+=0},z) \|_{\kE(\kZ_1; \kZ_1)} \cdot\Wert \zeta(u\vert_{z_+=0}, v^+)  - \zeta(w\vert_{z_+=0})\Wert \\
& \leq \Wert L +  \partial_z\nabla_z V \Wert_{\nu-\xi/2, \xi/2} \Wert \partial_v \zeta\Wert_{\nu-\xi/2}\epsilon \| \psi^v_0\|_{\nu-\xi}\\
&  \leq( \Wert L\Wert+ \Vert a \Wert_{\nu-\xi/2} +  \Vert \partial_z F \Wert_{\nu-\xi/2, \xi/2} ) \cdot \frac{2K \delta}{\xi} \cdot \frac{4 C_\kS   K \delta\epsilon}{\xi} \\ 
& \leq (\Wert L\Wert+ C_a +\kappa^{-1}C_F) \frac{4 C_\kS  K  \delta\epsilon}{\xi} .
 \end{align*}
Here we denote
\[
 \Vert \partial_z F \Wert_{ \nu,\sigma} := \sup_{\substack{w \in \kV+\i \nu,\\ z \in \kS+\i \sigma}} \|\partial_z F(w,z)\|_{\kE(\kZ;\kZ_1)}, 
 \]
and, defining $\Vert \partial_z \nabla_z V \Wert_{ \nu,\sigma}$,  analogously we use that 
 \[
 \Wert \partial_z \nabla_z V \Wert_{ \nu,\sigma} \leq \Vert \partial_z \nabla_z V \Wert_{ \nu,\sigma}   \leq \Vert a \Wert_{ \nu}+
  \Vert \partial_z \nabla_z F \Wert_{ \nu,\sigma} .
\]
 Plugging these estimates into \eqref{e.rEst}, using 
\eqref{e.beta12-est} and \eqref{e.psiz1-est},  we obtain
 \begin{align*}
\Wert r_+\Wert_{\nu-\xi} & 
\leq  (\Wert L \Wert+ C_a +\kappa^{-1}C_F)  \frac{ 4  C_\kS K\delta\epsilon}{\xi}\cdot 
 (1+\Wert\psi_1^z\Wert_{\nu-\xi} )
 +\epsilon (C_h' + \kappa^{-1} C_H) \Wert \psi_1^w \Wert_{\nu-\xi}\\
 &\leq  (\Wert L \Wert+ C_a +\kappa^{-1}C_F)  \frac{  4 C_\kS K\delta\epsilon}{\xi}\cdot (1+ \sigma\kappa^{-1})
 + (C_h' +  \kappa^{-1}C_H) \frac{4\epsilon C_\kS K\delta}{\xi \kappa} \\
 & \leq 
 c \frac{\epsilon K \delta}{\xi}.
\end{align*}

For ${A_+} =L + a_+$ we have {using \eqref{e.Hamp}},
that $\langle A_+ z_1, z_2 \rangle = \partial^2_{z_+} H_+(w_+,0)  z_1 z_2$
where 
\begin{align*}
 \partial^2_{z_+} H_+(w_+,0) 
&  =
\partial_{z_+}( \partial_z H(w,z) \partial_{z_+}z
 + \partial_w H(w,z ) \partial_{z_+} w)\vert_{z_+=0} \notag\\
& = \partial_z^2 H (\partial_{z_+}z)^2 +  \partial_z H \partial^2_{z_+}z
+\partial_w^2H (\partial_{z_+} w)^2+ \partial_w H \partial_{z_+}^2 w \\
&  +2 (\partial^2_{zw} H \partial_{z_+} z \partial_{z_+} w)_{\rm sym} \notag\\
&= \partial^2_{z}H(\id+\psi_1^z)^2+\partial_z H \psi_2^z+\epsilon^2 \partial^2_w H (\psi^w_1)^2+  \epsilon  \partial_w H \psi^w_2\\
& 
+2\epsilon  (\partial^2_{zw} H (\id+\psi_1^z)\psi^w_1)_{\rm sym}\\
&= (L+ \partial_z^2 V) (\id+\psi_1^z)^2+\partial_z H \psi_2^z
+\epsilon^2 \partial^2_w H (\psi^w_1)^2+  \epsilon  \partial_w H \psi^w_2
\\
& +2\epsilon  (\partial^2_{zw} V (\id+\psi_1^z)\psi^w_1)_{\rm sym}.
\end{align*}
Here  we have used \eqref{e.alphaBeta} and  
for any bilinear form  $M:\kZ^\C\times \kZ^\C \to \C$ we define $M_{\rm sym}$ to be its
symmetrization:
\[
M_{\rm sym}(z_1, z_2)  = \frac 12( M(z_1, z_2) + M(z_2, z_1));
\]
 all derivatives of $H$ and $V$ are evaluated at 
$({w}\vert_{z_+=0},z\vert_{z_+=0})$ and all derivatives w.r.t.~$z_+$ of $z$ and $w$ are evaluated at $z_+=0$.
Hence
\begin{align*}
\langle \cdot, (a_+(w_+)-a(w_+)) \cdot\rangle &= \partial_z^2 V - \partial_z^2 V (w_+,0)
+  (\partial_z^2 V+L) (2 +\psi_1^z)\psi_1^z +\partial_z H \psi_2^z \\
& 
+\epsilon^2 \partial^2_w H (\psi^w_1)^2+  \epsilon  \partial_w H \psi^w_2
+2\epsilon ( \partial^2_{zw} V (\id+\psi_1^z)\psi^w_1)_{\rm sym}.
\end{align*}
 Therefore, using
 Lemma \ref{l.analimpl}, \eqref{e.psiwEst}, \eqref{e.beta0-est},   \eqref{e.beta12-est}, and  \eqref{e.psiz1-est}  we obtain
\begin{align} 
 \Vert{a_+}(w_+)-a(w_+) \Wert_{\nu-\xi}& = \Vert\partial_{z}\nabla_z V(w\vert_{z_+=0}, z\vert_{z_+=0}) - \partial_{z}\nabla_z V(w_+,0)\Wert_{\nu-\xi}\notag\\
 & +( \Wert \partial_z\nabla_z V\Wert_{\nu-\xi/2, \xi/2} +\Wert L \Wert)(\Wert \psi^z_1\Wert_{\nu-\xi} + 2) \Wert \psi^z_1\Wert_{\nu-\xi} 
  \notag\\
 &
   +\Vert \partial_z H \psi_2^z \Wert_{\nu-\xi}+  \epsilon^2 \| \partial^2_w H\|_{\nu-\xi/2, \xi/2} \Wert\psi^w_1\Wert^2_{\nu-\xi}
 \notag\\
 &  +\epsilon \|\partial_w H\|_{\nu-\xi/2, \xi/2}\Wert \psi^w_2\Wert_{\nu-\xi}
\notag\\
& +2\epsilon \Vert \partial_{w}\nabla_z V\Wert_{\nu-\xi/2, \xi/2} (1+\Vert\psi_1^z\Wert_{\nu-\xi})\Wert\psi^w_1\Wert_{\nu-\xi}\notag\\
& \leq    \Vert\partial_{z}\nabla_z V(w\vert_{z_+=0}, z\vert_{z_+=0}) - \partial_{z}\nabla_z V(w_+,0)\Wert_{\nu-\xi}\notag\\
& +(C_a + C_F \kappa^{-1} + \Wert L \Wert) \cdot (2+ \sigma\kappa^{-1})\frac{K\delta }{\kappa}+
\frac{C_H}{\kappa}\cdot \frac{2K\delta}{\kappa^2}\notag\\
&  +  \| \partial^2_w H\|_{\nu-\xi/2, \xi/2}\left( \frac{ 4\epsilon K C_\kS  \delta}{\xi\kappa}\right)^2     
 +\| \partial_w H\|_{\nu-\xi/2, \xi/2} \frac{ 8\epsilon K C_\kS   \delta}{\xi\kappa^2}
\notag \\
& +  \Vert \partial_{w} \nabla_z V\Wert_{\nu-\xi/2, \xi/2} (1+\sigma\kappa^{-1}) \frac{ 8\epsilon K C_\kS \delta}{\xi\kappa}.
 \label{e.AEst}
 \end{align}
  Using    Cauchy's estimate we get
 \[
  \Vert \partial_{w} \nabla_z V\Wert_{\nu-\xi/2, \xi/2} \leq 
  \Vert \partial_{w} a z + \partial_w r + \partial_w F\Wert_{\nu-\xi/2, \xi/2}  \leq
\frac{2 (C_\kS C_a+ \delta  + C_F)}{\xi }
  \]
  and
  \[
   \| \partial_{w} H\|_{\nu-\xi/2, \sigma} \leq
   \frac{2C_H}{  \xi}, \quad
\| \partial^2_w H\|_{\nu-\xi/2, \xi/2}   \leq \frac{4 C_H}{\xi^2}.
 \]
 Furthermore, using the mean value theorem, Cauchy's estimate  and \eqref{e.psizEst}, \eqref{e.psiwEst} we obtain for $w_+ \in \kV + \i(\nu-\xi)$ 
 \begin{align*}
 \Vert\partial_{z}\nabla_z & V(w\vert_{z_+=0}, z\vert_{z_+=0}) - \partial_{z}\nabla_z V(w_+,0) \Wert
 \\
 &  \leq 
   \Vert\partial_{z}F(w\vert_{z_+=0}, z\vert_{z_+=0}) - \partial_{z}F(w\vert_{z_+=0}, 0)\Wert  \\
  &\quad +    \Vert\partial_{z}F(w\vert_{z_+=0}, 0) - \partial_{z}F(w_+,0)\Wert  + \Vert a(w\vert_{z_+=0}) - a(w_+)\Wert \\
& \leq \Vert\partial^2_{z}F\Wert_{\nu-\xi/2, \xi/2} \| \psi^z_0\|_{\nu-\xi} 
+  \epsilon\Vert\partial_{z}\partial_w F\Wert_{\nu-\xi/2, \xi/2} \| \psi^w_0\|_{\nu-\xi} 
\\
& \quad + \Vert\partial_w a\Wert_{\nu-\xi/2}\epsilon \| \psi^w_0\|_{\nu-\xi} \\
& \leq \frac{2 C_F}{\kappa^2}\cdot K\delta + \frac{2C_F}{\kappa \xi}\cdot\frac{4\epsilon  C_\kS K\delta}{\xi}
+ \frac{2C_a}{\xi} \cdot\frac{4\epsilon  C_\kS K\delta}{\xi}.
\end{align*}
  Plugging these    into \eqref{e.AEst} 
the estimate for $a_+-a$ in \eqref{e.ham-it-ests} then follows upon use of the conditions in \eqref{e.ItLeHam-conds}. Moreover  for  $A_+(w_+) =L + a_+(w_+)$ we have 
\[
\frac{2}{K} \Wert z\Wert \le \Wert A(w_+) z\Wert
\leq \Wert(a(w_+)-a_+(w_+) ) z\Wert+ \Wert A_+(w_+)z \Wert
\]
and so 
\[
 \Wert A_+(w_+)z \Wert \geq  \left(\frac{2}{K}  -  \Wert (a(w_+)-a_+(w_+)  \Wert \right) \Wert z \Wert
\]
and hence
\begin{align*}
\Wert A_+(w)^{-1}\Wert &\le 
\frac{K}{2}\left(1 - \frac{K c\delta }{2\xi}\right)^{-1} \leq \frac{K}2 \left(1+ \frac{c K \delta}{\xi}\right)
\end{align*}
for $ K c\delta<\xi$. Here we have used that  $(1-x)^{-1} = 1 + \frac{x}{1-x} \leq 1+2x$ if $x \in [0,\frac12]$,
with $x=\frac{c\delta K }{2\xi}$. Redefining $c$ to $\max(1, K^2)c$ then  verifies \eqref{e.hamK+} if \eqref{e.K+} holds true.

Finally
\begin{align*}
\Vert F_+ -F \Wert_{\nu-\xi, \sigma-3\xi/2}   \leq & 
\Vert\nabla_{z_+} H_+ -\nabla_{z_+} H \Wert_{\nu-\xi, \sigma-3\xi/2}
+ 
 \Wert r_+  - r  \Wert_{\nu-\xi} \\
 &  + \Vert a_+  - a\Wert_{\nu-\xi} C_\kS
\end{align*}
Using the estimates for $\Wert r_+-r\Wert_{\nu-\xi}$ and $ \Vert a_+  - a\Wert_{\nu-\xi}$
obtained above  we only need to estimate the following
\begin{align*}
\Vert\nabla_{z_+} & H_+ - \nabla_{z_+}H \Wert_{\nu-\xi, \sigma-3\xi/2} \leq 
\Vert(\id +\partial_{z_+} \psi^z)^*\nabla_z H \circ\Psi -\nabla_{z_+} H\Wert_{\nu-\xi, \sigma-3\xi/2}
\\
& +  \epsilon\Vert(\partial_{z_+}\psi^w)^* (\partial_w H  \circ \Psi)^* \Wert_{\nu-\xi, \sigma-3\xi/2}\\
& \leq \Vert(\nabla_{z} H)\circ \Psi   -\nabla_{z_+} H \Wert_{\nu-\xi, \sigma-3\xi/2}
+ \Vert(\partial_{z_+}\psi^z)^*(\nabla_{z} H\circ \Psi ) \Wert_{\nu-\xi, \sigma-3\xi/2}\\
&  +  \epsilon\Vert\partial_{z_+}\psi^w \Wert_{\nu-\xi, \sigma-3\xi/2} \Vert \partial_w H \Vert_{\nu-\xi/2, \sigma-\xi}\\
& \leq \Vert \nabla_z\partial_w H \Wert_{\nu-\xi/2, \sigma-\xi}
 \epsilon \Vert \psi^w\Vert _{\nu-\xi, \sigma-3\xi/2}\\
 & + 
\Wert \nabla_z \partial_z H   \Wert_{\nu-\xi/2, \sigma-\xi} \Vert\psi^z\Wert _{\nu-\xi, \sigma-3\xi/3} + \Vert \nabla_{z} H\Vert_{\nu-\xi/2, \sigma-\xi}\Vert \partial_{z_+}\psi^z\Wert_{\nu-\xi, \sigma-3\xi/2}\\
&  +  \epsilon\Vert\partial_{z_+}\psi^w \Wert_{\nu-\xi, \sigma-3\xi/2} \Vert \partial_w H \Vert_{\nu-\xi/2, \sigma-\xi}\\
 &\leq \Vert \partial_w r+  \partial_w a  z + \partial_w F \Wert_{\nu-\xi/2, \sigma-\xi}
 \frac{4\epsilon C_\kS  K \delta}{\xi} +  \Wert a + L+\partial_z F   \Wert_{\nu-\xi/2, \sigma-\xi}K \delta\\
& + \Vert  r+ Lz+ az +F\Vert_{\nu-\xi/2, \sigma-\xi}  \Vert \partial_{z_+}\psi^z\Wert_{\nu-\xi, \sigma-3\xi/2} 
+\frac{2C_H}\xi   \frac{4\epsilon C_\kS K \delta}{\xi} \frac{2}\xi
 \end{align*}
and so
\begin{align*}
\Vert\nabla_{z_+} & H_+ - \nabla_{z_+}H \Wert_{\nu-\xi, \sigma-3\xi/2} 
    \leq   (C_r+  C_a  C_\kS + C_F)\frac{2}{\xi}
 \frac{4\epsilon C_\kS  K \delta}{\xi}  \\
  &+ (C_a + \Vert L\Vert + 2C_F/\xi) K \delta +( C_r+( C_a  +\Vert L\Vert) C_\kS +C_F) \frac{2 K \delta}{\xi}
 +\frac{2C_H  K \delta}{\xi^2}\\
 & \leq c \delta/\xi^2,
\end{align*}
where we used that $H \circ \Psi = H_+$ in the first inequality and  the definition
$\psi: = (\psi^z, \epsilon \psi^w)$ in the third and fourth inequality.
In the third inequality we use the mean value theorem and in the fourth inequality and final inequality 
Cauchy's estimate together with \eqref{e.psizEst} and \eqref{e.psiwEst} and \eqref{e.ItLeHam-conds}. A similar estimate shows that $\Vert f_+-f\Vert_{\nu-\xi, \sigma-\xi}\leq c \delta/\xi$.
\epr

%%%%%%%%%%%%%%%%%%%%%%%%%%%%%%%%%%%%

\subsection{Proof of Theorem \ref{t.main}}
\label{ss.ProofMain}

To finish the proof of the theorem we successively apply the Iterative Lemma   \ref{l.itHam} as follows:
We   first apply the symplectic transformation from the Iterative Lemma \ref{l.itHam} 
three times to introduce $(w_3,z_3)\mapsto(w_2,z_2)\mapsto (w_1,z_1)\mapsto (w_0,z_0)$
taking 
 \[
 \xi=\xi_0=\xi_1=\xi_2 =\frac16 \min\,(\nu_0-  \nu,\sigma_0- \sigma).\]
We choose $\delta_0>0$ and $\epsilon>0$ sufficiently small to satisfy  \eqref{e.deltaCond} and the conditions \eqref{e.ItLeHam-conds} and \eqref{e.K+} of 
   Lemma \ref{l.itHam} for the above choice of $\xi_0$.  Applying this lemma  once we obtain  $\delta_1 =\kO( \epsilon)$.
For all successive iterations we use the following bound for $C_{F_n}''[\nu_n, K_n\delta_n]$ from \eqref{e.CF''}
where $K_n$, $\delta_n$ etc.~denote the constants
of Lemma \ref{l.itHam} after $n$ iterations:
We set
\begin{equation}
\label{e.F''}
 C_{F_n}''[\nu_n, K_n\delta_n]=  \frac{2 C_{F_n}[\nu_n,K_n \delta_n+\kappa]}{\kappa^2},
\end{equation} 
by applying  the Cauchy estimate \eqref{e.cest}.  
 Here $\kappa> 0$, $K_n\delta_n+\kappa \leq\sigma_n-\xi_n/2$ and  $ C_F[\nu_n,K_n \delta_n+\kappa]$ is such that
$\Wert F_n\Wert_{\nu_n, K_n\delta_n+\kappa} \leq C_{F_n}[\nu_n,K_n \delta_n+\kappa]$.
 We let $\sigma_n-\xi_n \geq \sigma$ for all $n\leq N$ with $N\in\N$ to be determined later so that we can choose $\kappa = \sigma$ (noting that $K_n \delta_n \leq \xi_n/2$ by \eqref{e.ItLeHam-conds}).
We then use the  condition   
\begin{equation}\label{e.delta-ham}
\delta_n<   \frac{\kappa^2  }{ K_n^{2} C_{F_n}[\nu_n,K_n \delta_n+\kappa]}
\end{equation}
 instead of \eqref{e.deltaCond} in the following.

Since $\delta_1= \kO(\epsilon)$ we can satisfy \eqref{e.delta-ham}  and the other conditions \eqref{e.ItLeHam-conds} and \eqref{e.K+} of 
   Lemma \ref{l.itHam} for sufficiently small $\epsilon$
and therefore apply Lemma \ref{l.itHam} twice to obtain $\delta_2 =\kO( \epsilon^2)$ and $\delta_3 =\kO( \epsilon^3)$.
 We can then  ensure  
 that $\delta_3$ is  small enough so that
for $n\geq 3$ the conditions of Lemma \ref{l.itHam},  \eqref{e.delta-ham},  \eqref{e.ItLeHam-conds} and \eqref{e.K+}, 
 are satisfied for the choice $\xi_n = \kO(\epsilon)$.
We now  apply  Lemma  \ref{l.itHam} successively starting from $(\kV+\i\nu_3)\times (\kS+\i\sigma_3)$ with 
\begin{align}\label{e.nu2sigma2}
\nu_3-\nu=
\sigma_3-\sigma =\frac12 \min(\nu_0-\nu, \sigma_0-\sigma).
\end{align}
  Note that 
  \[
  \Vert h_3-h_0\Vert_{\nu_3},\,\Vert a_3-a_0\Wert_{\nu_3},
 \,\Vert f_3-f_0\Vert_{\nu_3,\sigma_3 }, \,\Vert F_3-F_0\Wert_{\nu_3,\sigma_3-\xi_0/2 } =\kO(\delta_0). 
 \]
 We then apply the transformations 
\begin{align*}
\Psi_n:&(\kV+\i\nu_{n+1})\times (\kS+\i\sigma_{n+1})\rightarrow (\kV+\i(\nu_n-\xi_n/2))\times (\kS+\i(\sigma_n-\xi_n/2)),\\
&(w_{n+1},z_{n+1})\quad \mapsto \quad (w_n,z_n),
\end{align*}
iteratively, with 
\begin{equation}
\label{e.xin}
\xi_n = 2 c_* \epsilon \geq  2\epsilon\max(4 C_\kS, c_n),
\end{equation}
with 
$\nu_n=\nu_3-\sum_{i=3}^{n-1} \xi_i\ge {\nu}$, $\sigma_n = \sigma_3-\sum_{i=3}^{n-1} \xi_i\ge {\sigma}$ for $3\leq n\leq N$ with $N\in \N$
and   $c_*$ to be determined later.  Here we choose $\epsilon>0$ small enough such that $\xi_n\leq 1$.
 This choice of $\xi_n$ ensures  that 
\begin{align*}
 \delta_{n+1}\le \frac{1}{2}\delta_{n},
 % \label{e.deltan1}
 \end{align*}
 cf. \eqref{e.deltaHam-est},
 and that  
 \begin{align*}
  K_{n+1} - K_n, C_{f_{n+1}}- C_{f_n}, C_{a_{n+1}}-  C_{a_n}, C_{h_{n+1}}-  C_{h_n}, C'_{h_{n+1}}- C'_{h_n}   \leq \frac{c_n\delta_n}{\xi_n}\le \frac{\delta_n}{\epsilon},
\end{align*}
and
\begin{align*}
C_{F_{n+1}}-  C_{F_n} \leq \frac{c_n\delta_n}{\xi_n^2}\leq \frac{\delta_n}{\epsilon\xi_n} \leq \frac{\delta_n}{8C_\kS \epsilon^2 },
\end{align*}
cf.~\eqref{e.ham-it-ests}, \eqref{e.hamK+}, \eqref{e.xin}, where we take $\epsilon$ small enough such that $\xi_n < \min(\nu_n-\nu,\sigma_n-\sigma)$
for $n\leq N$, with $N$ to be determined later.
 Then
\begin{align*}%\label{e.deltan-est}
\delta_{n+1}\leq 2^{-n+2} \delta_3,
\end{align*}
where $\delta_3 = \kO(\epsilon^3)$.  This proves that  the constants from the Iterative Lemma \ref{l.itHam} are bounded uniformly with respect to $3\leq n\le N$ with
\begin{equation}\label{e.boundKnEtc}
 K_{n} - K_3,  C_{F_{n}}-  C_{F_3}, C_{a_{n}}-  C_{a_3}, C_{h_{n}}-  C_{h_3}, C'_{h_{n}}- C'_{h_3}= \kO(\epsilon).
 \end{equation}
  Since the constant $c$ from Lemma \ref{l.itHam} is increasing and continuous in $K$, $C_F$, $C_a$, $C_h$ and $C'_h$ there is $  c_*$ such that  
%\begin{equation}\label{e.boundcn}
$ c_n\leq   c_*$  for all $n\leq N$.
% \end{equation}
We choose $c_*\geq 4 C_\kS$ and set $\xi_n = 2 c_* \epsilon$, see \eqref{e.xin}.
Because of  the inequalities
\begin{align}
 {\nu}&\le \nu_3 -  2 c_* \epsilon  (N-2) \le  \nu_{N+1}= \nu_3 - \sum_{i=3}^{N}\xi_i, \notag \\
 {\sigma}&\le \sigma_3 -   2 c_* \epsilon  (N-2)\le \sigma_{N+1}= \sigma_3 - \sum_{i=3}^{N}\xi_i,
\label{e.nu-nu2}
\end{align}
noting that we want to define the transformed Hamiltonian on $\kV+\i\nu \times \kS+\i\sigma$ and using \eqref{e.nu2sigma2},  we
take $N$ to be 
\begin{align}\label{e.N}
N=\left\lceil\frac{M}{4  c_* \epsilon}\right\rceil,\quad M=\min\,(\nu_0-  \nu,\sigma_0-  \sigma).
\end{align}
Here we  denote by $\lceil x\rceil$ the smallest integer $\geq x$. This completes the proof of Theorem \ref{t.main}.

%%%%%%%%%%%%%%%%%%%%%%%%%%%%%%%%%%%%%%%%%%
\subsection{Remarks on the proof of  Theorem \ref{t.main}}

The following    remark shows that
we can construct the  slow manifold such that it contains a given equilibrium  of the Hamiltonian slow-fast system \eqref{e.HamPDE}.

\begin{remark}
Assume (H0-H3) and let $\delta_0>0$ and $\epsilon>0$ be sufficiently small.  In addition assume  that there exists a locally unique equilibrium  of \eqref{e.HamPDE} which in the $(w_0,z_0)$-coordinates takes the form {$(w^e,0)\in \kV\times \kS$}. Then  the equilibrium 
  $ (w^e,0) $ is a fixed point of $\Psi$.
\end{remark}
\bpr
Let $(w^e_{+},z^e_{+})$ be such that
 $ (w^e,0) = \Psi(w^e_{+},z^e_{+})$ where $\Psi$ is the symplectic transformation 
  from  \eqref{e.G-psi}, \eqref{e.alphaBeta}  used in the Iterative Lemma \ref{l.itHam}.
 First note that $z^e = 0$ implies that $\zeta(w^e) = 0$.
 Moreover from the definition of $\Psi$  we have
\begin{align*}
z^e_{+} &= z^e -\zeta(u^e,v^e_{+}) = -\zeta(u^e,v^e_{+}),\\
u^e_{+} &= u^e-\epsilon \langle J_\kZ^{-1}\partial_{v_+}\zeta(u, v_+), (x,y_+)\rangle,\\
v^e_{+} &= v^e+\epsilon  \langle J_\kZ^{-1}\partial_{u}\zeta(u, v_+), (x,y_+)\rangle.
\end{align*}
Insertion then proves the result.
\epr

Also note that we can assume   that the equilibrium $(w^e, z^e)$ in the above remark  is at $z^e=0$   without loss of generality,  by introducing the affine {symplectic} 
transformation $(w_0,z_0)\mapsto (w_0,z_0-z^e)$. It is important to 
start our iteration from 
$z^e=0$ - we can then ensure that  the  slow manifolds,  that we defined
iteratively in the proof of Theorem \ref{t.main}, contain this equilibrium. Obviously we could also transform $w^e=0$ but
 this is not necessary. 

\begin{remark}
\label{r.Lu} \rm
Lu \cite{Lu} uses our method for obtaining  a symplectic slow manifold as presented in an earlier
preprint version of this paper to study breathers in a semilinear wave equation
\begin{equation}
\label{e.LuSWE}
u_{tt} = u_{xx} -u + f(u)
\end{equation}
where $f(u)$ is odd, holomorphic and  $f'(0)=0$. He studies \eqref{e.LuSWE} on $2\pi/\omega$ odd periodic functions
where the lowest Fourier mode $\sin x$ corresponds to the slow dynamics (after rescaling $x \to x/\omega$).
He transforms $v=u_t$ such that the transformed system becomes well-posed on the subspace $\kZ$ of odd functions in 
$\kH_1\times \kH_1$. After several other transformations the resulting fast system (in the fast time) takes the form
\begin{equation}\label{e.LuPDE}
\dot z = J_\kZ L z  + \epsilon^2 B(w,z).
\end{equation}
So compared to \eqref{e.HamPDE} the nonlinearity is of order $\epsilon^2$. 
Instead of solving $\dot z=0$ in Lemma \ref{l.analimpl}, Lu just solves $Lz = -\epsilon^2 r(w)$, 
where $B(w,z) = J_\kZ r(w) + \kO(z)$,  for
$\hat \zeta(w) = -\epsilon^2 L^{-1}r(w)$ and defines the symplectic transformation $\Psi$ from Lemma \ref{l.sglem}
used in the Iterative Lemma \ref{l.itHam}, with $\hat\zeta(w)$ instead of $\zeta(w)$. For the special case \eqref{e.LuPDE} the error $\delta$ of his construction  of the slow manifold   still shrinks by an order of $\epsilon$ in each step, and this
 simplifies the proof considerably. 
\end{remark}

%%%%%%%%%%%%%%%%%%%%%%%%%%%%%%%%%%%%%%%%%%%%%%%%%%%%%%%%%%%%%%%%%%%%%%%%%%%%%%%%%%%%%%%%%%
%%%%%%%%%%%%%%%%%%%%%%%%%%%%%%%%%%%%%%%%%%%%%%%%%%%%%%%%%%%%%%%%%%%%%%%%%%%%%%%%%%%%%%%%%%
 
\section{An invariant two-dimensional slow manifold} \label{s.psm}
In this section we prove the existence of a two dimensional normally elliptic slow manifold with exponentially 
small gaps under the following assumptions:
Consider again a real analytic slow-fast Hamiltonian system with Hamiltonian 
$H_0(w_0,z_0)$, but now with a single
slow degree of freedom, which in addition to (H0-H3) satisfies the following assumptions:

 \begin{itemize}
 \item[(I1)]   $d_{\kW}=1$, and  $z_0=0$ is invariant at $\epsilon=0$ for $w_0\in \kV+\i\nu_0$. 
  Moreover $\{z_0=0,\epsilon=0\}$ is filled with a family of     periodic orbits parametrized by energy $E \in (E_1, E_2)+ \i e_0$.
  Their frequency $\omega^E$ as a function of energy satisfies $\omega^E \neq $ and
   $\frac{\partial \omega^E}{\partial E} \neq 0$ for $E \in (E_1, E_2)+ \i e_0$, $e_0>0$. 
\item[(I2)]
$\dim \kZ= 2d_\kZ<\infty$,
$J_\kZ$ is standard, 
     and    $A_0(w_0) $ is of the form 
\begin{align}
 A_0(w_0) = L+a_0(w_0) = L + \epsilon M_0(w_0),\label{e.Aform}
\end{align}
suppressing the $\epsilon$-dependency in $M_0(w_0)$, with
\begin{align*}
 L=\text{diag}\,(\omega_1,\ldots,\omega_{d_\kZ},\omega_1,\ldots,\omega_{d_\kZ}).
 \end{align*}
Moreover, $\delta_0 = \kO(\epsilon)$.
\item[(I3)] 
We have $\omega_{i}\ne 0$ for all $i$ and the following non-resonance condition holds:
\begin{align}
\forall \ell\ne m \quad \omega_{\ell}\ne \omega_m. \label{e.nonres}
\end{align}
\end{itemize}

  Then by Theorem \ref{t.main}
  there exists a symplectic map $(w,z)\mapsto (w_0,z_0)$ that transforms the Hamiltonian into
\begin{align}
 H(w,z) = h(w)+r(w)\cdot z+\frac12 A(w)z\cdot z +{f}(w,z),\label{e.hampsm0}
\end{align}
defined on "$(\kV+\i \nu) \times (\kS+\i \sigma)$, with
 $\|r\|=\kO(\e^{-C/\epsilon})$ provided $\epsilon$ is sufficiently small. 
 Here, as before, $f = \kO(\|z\|^3)$, and from  $\delta_0 = \kO(\epsilon)$, we conclude
 that  $A(w) = L + \epsilon M(w)$ has the same form as $A_0(w_0)$ so that (I2) and (I3) hold 
for $A(w)$ too. Moreover (I1) holds for the $\dot w$ equation at $\epsilon=0$ 
for $E \in (E_1, E_2) + \i e$ with $e_0\geq e>0$.

Note that if both \eqref{e.Aform} and \eqref{e.nonres} are not satisfied 
then ``Takens chaos'' can occur, see \cite{Takens}.
 In this section we consider \eqref{e.HamPDE} on the slow time scale $\tau =  \epsilon t$.

In words, the result of this section is then the following: A periodic orbit for  the $\dot w$ equation at $\epsilon=0$ can be continued into the full system with Hamiltonian \eqref{e.hampsm0} provided that there is no resonance with the fast system. If there is  a resonance, then   this only excludes exponentially small bands in the $w$-plane of periodic orbits. This result can be viewed as an extension of a result of Gelfreich and Lerman in \cite{geller2} to several fast variables. %As opposed to Gelfreich and Lerman's result, our conditions (see \eqref{ljcond} below), are, unfortunately, not directly verifiable. 

\begin{remark}\rm
The setting considered in this section applies to the LK model in \cite[(2.7)-(2.10)]{van1} and the generalized conservative versions \cite[(6.1)-(6.2)]{van1} with $s\in \R^2$ and $\kL(s)$, as defined in \cite{van1}, independent of $s$, and the main result (Theorem \ref{t.sm}, below) therefore applies to these examples. 
\end{remark}

Before stating the result (Theorem \ref{t.sm} below), we perform a sequence of simplifications serving to bring the system into a form appropriate for application of the contraction mapping theorem. It is important to note that we are not connecting with $\epsilon=0$. Instead we are introducing an artificial perturbation parameter $\mu$.

First we transform $h=h(w)$  into action-angle variables $(I,\phi)$.
We have
\begin{lemma}\label{l.ActionAngle} Assume (H0-3) and (I1-I3).
 Let $(I,\phi)\mapsto w$ be the symplectic change of coordinates which transforms
 $h(w)$ to   $h(w)=\breve h(I)$. This transformation is analytic from $w \in \kV + \i \nu $ 
 to $ (\phi, I) \in ([0,2\pi]+ \i \psi) \times ((I_1, I_2) + \i \iota)  $
 for some $\psi, \iota>0$.  Here $[0,2\pi]+ \i \psi$  is
a complex neighbourhood of length $\psi$ around $[0,2\pi]$  and
$(I_1, I_2) + \i \iota$  a complex neighbourhood of length $\iota$ around $(I_1, I_2) \subseteq \R^+$.
This map transforms \eqref{e.hampsm0} into
 \begin{align}
H(w, z) = \breve H(\phi, I, z) =\breve  h(I)+\langle \breve r(\phi,I), z\rangle +\frac12 \langle \breve A(\phi,I)z, z\rangle +\breve f(\phi,I,z),\label{e.Hpsm}
\end{align}
where $\breve h(I) = h(w)$, $\breve r(\phi,I)=r(w)$, $\breve A(\phi,I)=L+\epsilon \breve M(\phi, I)$, $ \breve M(\phi,I) = M(w)$, and $\breve f(\phi,I,z)=f(w,z)$. 
 \end{lemma}
\begin{proof}
 The system with Hamiltonian $h=h(w)$
is integrable since it is a one-degree of freedom system. This transformation does not depend 
upon the fast variables and can therefore directly be lifted to the full space.
\end{proof}

Next we reduce to an  energy level:
Since $ \omega^E\neq 0$ by (I1) we have $\omega(I) := \partial_I\breve h(I) \ne 0$, and so we can solve
 the equation $\breve H(\phi,I,z)=E$  for $$I=I^E(\phi,z),$$ when $z \in \kB_\sigma^{\kZ^\C}(0)$ 
 by potentially decreasing $\sigma>0$, 
 and we may introduce the angle $\phi$ as new time.
 \begin{lemma} Under the above assumptions 
 $z=z(\phi)$  solves the following non-autonomous Hamiltonian system of
 equations:
\begin{align}
 \epsilon z'(\phi) &= -J_{\kZ}\nabla_z I^E(\phi,z).\label{e.zprime}
\end{align}
  \end{lemma}
\begin{proof}
By definition 
\begin{align*}
 \epsilon \dot z  =  \epsilon \frac{\d z}{\d\phi} \partial_I \breve H = J_{\kZ} \nabla_z \breve H.
\end{align*}
 Next, we differentiate 
 \begin{align}
 \breve H(\phi,I^E(\phi,z),z)=E,\label{e.KE}
 \end{align}
  with respect to $z$ to obtain
 \begin{align}
  \partial_I \breve H \nabla_z I^E =- \nabla_z  \breve H.\label{e.KEz}
 \end{align}
This completes the result. 
\end{proof}

\begin{lemma}\label{l.IE}
  Under the above assumption $I^E = I^E(\phi,z)$ takes the following form
  \begin{align}
   I^E(\phi,z) &=\breve h^{-1}(E)-\frac12  \langle A^E(\phi) z, z\rangle-
   \langle r^E(\phi), z\rangle - f^E(\phi,z),\label{e.KE2}
  \end{align}
    with 
   \[
  r^E(\phi) =  \breve r(\phi, \breve h^{-1}(E))/\omega^E,
  \quad   f^E(\phi,z) = \kO(\| z\|^3)
  \]
    and
  \[
   A^E(\phi):= \breve A(\phi,\breve h^{-1}(E))/\omega^E   +  \kO(e^{-C/\epsilon}) =L^E + \epsilon M^E(\phi),
    \]
      where
      \begin{align}
       \omega^E = \omega(\breve h^{-1}(E)) \label{e.OmegaE}
      \end{align}
and
  \[ 
   L^E =  L/\omega^E, \quad M^E = \breve M(\phi, \breve h^{-1}(E))/\omega^E   +  \kO(e^{-C/\epsilon}).
   \]
   Here $I^E$, $f^E$, $M^E$ and $L^E$ are analytic in $\phi \in [0,2\pi]+\i\psi$, $E \in (E_1,E_2)+\i e$ and $z \in \kB_\sigma^{\kZ^\C}(0)$.
   Finally
   \begin{equation}\label{e.rE}
   \| {r}^E\|_\psi := \max_{\phi \in [0,2\pi] + \i \psi}\| r^E(\phi)\| = \kO(e^{-C/\epsilon})
   \end{equation}
  uniformly for $E \in (E_1,E_2)+\i e$.
 \end{lemma}
\begin{proof}
 Equation \eqref{e.KE} with $z=0$ gives
 \begin{align*}
  I^E(\phi,0) =\breve h^{-1}(E),
 \end{align*}
 cf. \eqref{e.Hpsm}.    Setting $z=0$  in \eqref{e.KEz} then gives
\begin{align}
 \nabla_z I^E\vert_{z=0} =- \breve r(\phi, \breve h^{-1}(E))/\omega^E=-r^E(\phi)  =  \kO(\e^{-C/\epsilon}).
 \label{e.nablaIE}
\end{align}
If we differentiate \eqref{e.KE} again with respect to $z$   
 we get \begin{align*}
0 & =  \partial_z^2\breve H\vert_{z=0}  +  \partial_I^2 \breve H\vert_{z=0} (\partial_z I^E\vert_{z=0})^2
  +2( \partial_I\partial_z \breve H\vert_{z=0} \partial_z I^E\vert_{z=0})_{\rm sym} \\
&  \quad  + \partial_I  \breve H\vert_{z=0} \partial_z^2 I^E\vert_{z=0},
 \end{align*}
and so we find that
\begin{align*}
 \partial_z \nabla_z I^E\vert_{z=0}  & = -( \breve A(\phi,\breve h^{-1}(E)) +  \partial^2_I \breve h(I)   \nabla_z I^E\vert_{z=0} \partial_z I^E\vert_{z=0} )/ \omega^E,\\
   & \quad - (  \partial_I \breve r(\phi,I) \partial_z I^E\vert_{z=0} +   (\partial_I \breve r(\phi,I) \partial_z I^E\vert_{z=0})^T )/\omega^E
\end{align*}
where $I = \breve h^{-1}(E)$. Using \eqref{e.nablaIE} this gives the result.
\end{proof}

Now we fix $\epsilon$ small and introduce $\|r^E\|_\psi \leq \mu^2=\kO(\e^{-C/\epsilon})$ as a measure of the remainder in \eqref{e.KE2}.
Setting $z=\mu\hat z$ then transforms  \eqref{e.zprime}  into
 \begin{align}
  \epsilon \hat z'= J_{\kZ}  ( A^E(\phi) \hat z + \hat r^E(\phi) + \hat F^E(\phi, \hat z)) \label{e.barz}
  \end{align}
  where
  \[
   \hat r^E(\phi)  :=  r^E(\phi)/\mu, \quad \hat F^E(\phi, \hat z) :=  F^E(\phi, \mu \hat z)/\mu = 
  \nabla_z f^E(\phi, \mu \hat z)/\mu,
  \]
 and $F^E = \nabla_z f^E$. Choosing $\epsilon>0$ small enough such that $\mu< 1/2$ we see that $\hat F^E(\phi, \cdot)$ is analytic
 on $\kB_{2\sigma}^\kZ(0)$. Then due  to \eqref{e.rE}  and  due to the fact that $ F^E(\phi,   z) = \kO(\|z\|^2)$
 we obtain 
  \begin{align}\label{e.hatFE}
\|\hat r^E\|_\psi & := \sup_{\substack{ \phi \in [0,2\pi] + \i \psi}} \|\hat r^E(\phi)\| = \kO(\mu),\quad
\|\hat F^E\|_{\psi, 2\sigma}  := \sup_{\substack{ \phi \in [0,2\pi]+\i\psi,\\ \|z\|\leq   2\sigma  }}\|\hat F^E(\phi, \hat z) \| = \kO(\mu)
   \end{align}
   uniformly in $E \in (E_1, E_2)$.
  In the notation $\|\hat F^E\|_{\psi,2\sigma}$, and in what follows we adapt the definition
 from (H3), with $\kS=\{0\}$.

Let $$\Pi^{\mu, E}:\{\phi=0\}\rightarrow \{\phi=2\pi\}$$ be the stroboscopic mapping obtained from \eqref{e.barz}.
 It is symplectic since the system is Hamiltonian. 
Note that $\Pi^{0, E}(0)=0$ due to (I1). The persistence of this fixed point 
  for $\mu \ne 0$ provides the persistence of the periodic orbits, which we have parametrized by $E$.
To study this mapping we consider the monodromy matrix $\Psi^E(2\pi, 0)$ associated with the linear problem (obtained from \eqref{e.barz} by setting $\mu=0$)
\begin{align}
 \epsilon \hat z'&=   J_{\kZ}A^E(\phi) \hat z=   J_{\kZ} \left( L^E+\epsilon  M^E(\phi)\right)\hat z.\label{e.newform}
\end{align}
The eigenvalues of $\Psi^E(2\pi, 0)$, $\lambda_1^E,\ldots,\lambda_{2 {d_\kZ}}^E$, are the characteristic multipliers of $\Pi_{\mu,E}$ at $z=0$. 
Exploiting the form of $A^E(\phi) = L^E+\epsilon M^E(\phi)$  and the non-resonance condition \eqref{e.nonres} we can approximate those very accurately using the following lemma:

\begin{lemma}\label{l.T}
Under the above assumptions 
 for all  sufficiently small $\epsilon>0$ there exist $\tilde C>0$ and  a  linear change of variables $\hat z\mapsto\tilde z =\hat z + \epsilon  T^E(\phi) \hat z$, which is $2\pi$-periodic in $\phi$, %is $\epsilon$ close to identity 
  and  is  analytic in $(\phi, E) \in [0,2\pi]\times (E_1, E_2)$  which transforms \eqref{e.newform} into
\begin{align}
\epsilon \tilde z'_j  &=  (\omega_j+\epsilon b_j^E(\phi))\tilde z_{j+d_\kZ}/\omega^E +  (R^E(\phi) z)_j \notag,\\
\epsilon\tilde z'_{j+d_\kZ} &=  -(\omega_j+\epsilon b_j^E(\phi)){\tilde z_j}/\omega^E+ (R^E(\phi) z)_{d_\kZ+j}.\label{e.redq}
\end{align}
Here  $b_j^E(\phi)$,
 $j=1,\ldots d_{\kZ}$, are  scalar analytic functions  and $R^E(\phi)$ 
    is a $(2 d_\kZ, 2d_\kZ)$ symmetric matrix which is analytic  in $(\phi,E)  \in [0,2\pi]\times (E_1, E_2)$ and satisfies
    \begin{equation}\label{e.REphi}
\|R^E\|:= \max_{ \phi \in [0,2\pi]}   \| R^E(\phi) \| = \kO(\e^{-\hat C/\epsilon})
    \end{equation}
    uniformly in $E \in (E_1, E_2)$.
\end{lemma}

\begin{proof}
Consider the system
\begin{equation}\label{e.ItEq2dSM}
\dot z = J_\kZ (L(\phi ) + \epsilon M(\phi))z
\end{equation}
where 
\[
L(\phi) = {\rm diag}(\lambda_1(\phi), \ldots,\lambda_{d_\kZ}(\phi),\lambda_1(\phi), \ldots, \lambda_{d_\kZ}(\phi)) 
\]
 is diagonal,  $M(\phi)$ is symmetric 
 and $L(\phi)$ and $M(\phi)$ are analytic in $\phi \in [0,2\pi]+\i\psi$
 and also depend on $\epsilon$ and $E$.
Note that \eqref{e.newform} is of the form \eqref{e.ItEq2dSM},
see Lemma \ref{l.IE},
 and we will use \eqref{e.ItEq2dSM} to set up an iterative lemma. 

For $\epsilon>0$ small enough let   $\id  + \epsilon T(\phi)$
be the  linear coordinate transformation such that
 \[
( \id  + \epsilon T(\phi))J_\kZ (L(\phi) + \epsilon M(\phi)) ( \id  + \epsilon T(\phi))^{-1}
= J_\kZ L_+(\phi)  
\]
where  
$L_+(\phi) $ has the same form as $L(\phi)$.
Then  $z_+ :=( \id  + \epsilon T(\phi))z$ defines a symplectic change of coordinates.
Moreover there are matrix valued functions $G$  and $F$ 
with $G(L,0) = 0$, $F(L,0) = 0$ such that
\[
L_+ = L + G(L,\epsilon M), \quad T  = F(L,\epsilon M).
\]
Both $G$ and $F$ are analytic as functions on $\kB_1 \times \kB_2$. Here $ \kB_1$ is ball
 of radius $r_1>0$ around $L_0:=L^E$ from \eqref{e.newform} in the $d_\kZ$ dimensional space
  of complex diagonal $(n,n)$ matrices, where $n=2d_\kZ$, 
 $\kB_2$ is a ball of radius $r_2>0$ around $0$ in the space   of  symmetric $(n,n)$ matrices,  and $r_1$, $r_2$  are small enough such that
$ J_\kZ (L+ S)$ has disjoint eigenvalues for $L  \in \kB_1$, $S \in \kB_2$.
  Therefore there are some smooth  non-decreasing
functions $f$ and $g$
mapping a neighbourhood of $0$ in $\R^2$ into $\R$ such that
\begin{align*}
\| F(L,M)\|_\psi  & \leq \| M\|_\psi  f(\|L-L_0\|_\psi, \|M\|_\psi),
\\
 \| G(L,M)\|_\psi & \leq \| M\|_\psi  g(\|L-L_0\|_\psi, \|M\|_\psi).
\end{align*}
Now $z_+ = ( \id  + \epsilon T(\phi)) z$  satisfies
\begin{align*}
\epsilon z'_+ &= ( \id  + \epsilon T(\phi))  \epsilon  z'
+ \epsilon^2 T'(\phi)  z \\
& = 
( \id  + \epsilon T(\phi))      J_{\kZ} \left( L(\phi)+\epsilon  M(\phi)\right) z  +   \epsilon^2 T'(\phi)  z\\
& =  J_\kZ L_+(\phi)  z_+  +  \epsilon^2 T'(\phi) ( \id  + \epsilon T(\phi))^{-1} z_+
\\
& =  J_\kZ L_+(\phi)  z_+  +  \epsilon M_+(\phi)  z_+
\end{align*}
where 
\[
  M_+(\phi) =   \epsilon T'(\phi) ( \id  + \epsilon T(\phi))^{-1}.
\]
Using Cauchy's estimate we get, with $B = L-L_0$,
\[
\| M_+\|_{\psi-\xi}  \leq \epsilon \frac{ \|T\|_\psi}{\xi(1 - \epsilon \|T\|_\psi) }
\leq  \frac{ \epsilon \|M\|_\psi  f(\|B\|_\psi, \epsilon\|M\|_\psi)}{\xi(1- \epsilon  \|M\|_\psi  f(\|B\|_\psi,\epsilon \|M\|_\psi))} \leq \|M\|_{\psi}/2
\]
if
\[
\xi = 2 \epsilon c, \quad\mbox{where}\quad
c \geq  \sC(\|B\|_\psi, \|M\|_\psi):= \max\left(1, \frac{   f(\|B\|_\psi, \epsilon\|M\|_\psi)}{1- \epsilon  \|M\|_\psi  f(\|B\|_\psi, \epsilon\|M\|_\psi)}\right).
\]
When iterating this procedure we need to ensure that $c_n = \sC(\|B_n\|_{\psi_n},\epsilon \|M_n\|_{\psi_n})$  is bounded   independent of $n$.
For this we first show that 
$C_{B_n} : = \|B_n\|_{\psi_n}$ where $B_n = L_n-L_0$,  is sufficiently small and
 $C_{M_n} : = \|M_n\|_{\psi_n}$ bounded,  so that $L_n \in \kB_1$ and $\epsilon M_n\in \kB_2$ for  all $n\leq N$ (with $N$ to be determined later)  provided $\epsilon>0$ is sufficiently small. Here $L_0=L^E$ and $M_0=M^E$  are as in \eqref{e.newform} and
$\psi_n = \psi - \sum_{j=1}^{n-1}\xi_j$.
This follows from the following estimates:
\begin{align*}
C_{M_{n}} & \leq C_{M_{n-1}}/2 \leq 2^{-n} C_{M_0},\\
C_{B_{n}}  & \leq \sum_{j=1}^n \|L_{j} - L_{j-1}\|_{\psi_j}
\leq \epsilon \sum_{j=0}^{n-1} C_{M_j} g_j \leq 2 \epsilon  C_{M_0} \max_{j=0,\ldots n-1} g_j,
\end{align*}
where
$g_j:= g(C_{B_j},\epsilon C_{M_j})$. This shows that $c_n$ is bounded for all $n$ if $\epsilon$ is small enough.
Let $c_* = \sup_{n=0,\ldots N-1} c_n$,  
define  $\xi_n = 2c_* \epsilon$ and let $N$ be the largest number such that 
\[
\psi - N \xi= \psi-2Nc_*\epsilon>0,\]
 so
$N := \lfloor  \frac{\psi}{2 c_*\epsilon}\rfloor$, the largest integer $\leq \frac{\psi}{2 c_*\epsilon}$.   Then the norm of $R^E(\phi):= \epsilon M_{N}(\phi)$ is bounded by $\epsilon C_{M_N} \leq 
\epsilon 2^{- \lfloor\frac{\psi}{2 c_*\epsilon}\rfloor} C_{M_0}$. We then set $T^E(\phi) = (T_N \circ\ldots \circ T_1)(\phi)$ and
$b_j^E =\omega^E (L_N-L_0)_j/\epsilon$, $j=1,\ldots  d_\kZ$.
\end{proof}
 
We can solve \eqref{e.redq} to obtain
\begin{equation}\label{e.Psi}
 \tilde z(2\pi) = \tilde\Psi^E(2\pi,0) \tilde z(0) =  \tilde\Phi^E(2\pi,0) \tilde z(0) + \kO(\epsilon^{-1}\e^{-\tilde C/\epsilon})\tilde z(0),
 \end{equation}
 where
%  \[
%  \tilde \Phi_E(2\pi,0)= \exp\left(\omega_E^{-1}\epsilon^{-1} J_\kZ  \int_0^{2\pi} \text{diag}(\omega+\epsilon b(s,E))\d s\right).
% \]
\begin{align*}
 \tilde \Phi^E(2\pi,0) = \begin{pmatrix}
                          \cos\left(\alpha^E/\epsilon\right) & \sin\left(\alpha^E/\epsilon\right)\\
                          -\sin\left(\alpha^E/\epsilon\right) & \cos\left(\alpha^E/\epsilon\right)
                         \end{pmatrix}
\end{align*}
and 
\begin{align}
 \alpha^E= \text{diag}\left(\alpha_1^E,\ldots,\alpha_\ell^E,\ldots,\alpha_{d_{\kZ}}^E\right):= \int_0^{2\pi} \text{diag}(\omega+\epsilon b^E(s))\d s/\omega^E.\label{e.alpha}
\end{align}
 The eigenvalues of $\tilde\Psi_E(2\pi, 0)$ are therefore
\begin{align}
 \lambda_\ell^E &= \exp\left(\i\epsilon^{-1} \alpha_\ell^E\right) + \kO(\epsilon^{-1}\e^{-\tilde C/\epsilon}),\quad \lambda_{\ell+d_{\kZ}}^E = \bar\lambda_\ell^E, \label{e.eigl} 
 \end{align}
for $\ell=1,\ldots, d_\kZ$.
We write $\Pi^{\mu, E}( \hat z) = \tilde \Pi^{\mu, E}(\tilde z) $ in the coordinates $\hat z \to \tilde z= \hat z + \epsilon T^E(\phi) \hat z$ from Lemma \ref{l.T} using variations of constants in \eqref{e.barz}:
% after using the coordinate change $\hat z \to \tilde z= \hat z + \epsilon T_E(\phi) \hat z$ from Lemma \ref{l.T} as:
\[
  \tilde\Pi^{\mu,E}( \tilde z)   = \tilde\Phi^E(2\pi,0) \tilde z +\tilde \rho^E(   \tilde z),
  \]
  where 
  \[
  \tilde \rho^E(  \tilde z_0) =
   \int_0^{2\pi} \tilde\Phi^E(2\pi, s) ( J_\kZ  \tilde r^E(s)
 +\epsilon^{-1} R^E(s) \tilde z(s) + J_\kZ\tilde F^E(s, \tilde z(s) ))\d s
  \]
 with $\tilde z(0) = \tilde z_0$ and
 \[
  \tilde r^E(\phi) = (\id + \epsilon T^E(\phi)) \hat r^E(\phi)/\epsilon, \quad \tilde F^E(\phi, \tilde z) =  (\id + \epsilon T^E(\phi)) \hat F^E(\phi,  (\id + \epsilon T^E(\phi))^{-1} \tilde z)/\epsilon.
 \]
 Choosing $\epsilon>0$ small enough such that $\epsilon\|T^E\|\leq 1/2$
 we obtain that  $\tilde F^E(\phi,\cdot)$ is analytic on $\kB_\sigma^\kZ(0)$.
 Due to \eqref{e.hatFE}  we have 
 \begin{align*}
  \|\tilde F^E\|_\sigma: = \sup_{\substack{\|z\|\leq \sigma\\ \phi \in [0,2\pi]}} \|\tilde F^E(\phi,z)\| =\kO(\epsilon^{-1}\mu),
  \quad
    \| \tilde r^E\| = \sup_{  \phi \in [0,2\pi] } \| \tilde r^E(\phi)\|= \kO(\epsilon^{-1}\mu), 
 \end{align*}
uniformly for $E \in (E_1, E_2)$,
 and therefore also 
 \begin{equation}\label{e.tildeRhoE}
  \|\tilde\rho^E \|_{\tilde\sigma}  = \kO(\epsilon^{-1}\mu)
 \end{equation}
uniformly in $E \in (E_1, E_2)$.
 Here we redefine $\mu^2= \e^{-C/\epsilon}$, with
 $C \leq  2\tilde C$ and set $0<\tilde\sigma<\sigma$ such that $\sigma-\tilde\sigma>\kO(\epsilon^{-1}\mu)$.
Whenever $(\id-\tilde\Phi^E(2\pi,0))^{-1}$ exists the fixed points  of $\tilde\Pi^{\mu,E}$ satisfy
 \begin{align}
    \tilde z =  \pi^{\mu,E}(\tilde z):=  (\id- \tilde\Phi^E(2\pi,0))^{-1}\tilde\rho^E(\tilde z).\label{e.fpEqn}
 \end{align}

  Then we have:
 \begin{theorem}\label{t.sm}
 Under assumptions (H0-H3), (I1-I3) for any $\epsilon>0$ sufficiently small there is a two-dimensional 
 manifold  $\kM_\epsilon$ of non-degenerate periodic orbits parametrized by energy $E \in (E_1, E_2) \setminus I$
 where $I$ is a union of  $\kO(\epsilon^{-1})$-many intervals, and the measure $|I|$ of $I$ is exponentially small:
 $|I| = \kO(\e^{-c/\epsilon})$ for some $c>0$.
\end{theorem}
\begin{proof}

 Let $E=E_0$ be a bifurcation value, i.e., $\tilde\Phi^{E_0}(2\pi, 0)$ has  at least two eigenvalues which are $1$ (they come in pairs). Then, for $\epsilon>0$ sufficiently small the following condition is satisfied:
\begin{align}
  \partial_E \alpha_\ell^{E}|_{E=E_0} \ne 0,
 \label{e.degcond}
\end{align}
for all $\ell$ with $\lambda_\ell^{E_0}=1$. 
% Recall from (\ref{e.alpha}) that
% \[
%     \alpha_\ell(E)= \omega_E^{-1}\int_0^{2\pi}(\omega_\ell+\epsilon b_\ell (s,E)))\d s
% \]
% so that
% \[
% \lambda_\ell(E,\epsilon) =  \e^{\i  \alpha_\ell(E)/\epsilon} + \kO(\epsilon^{-1}\mu), \quad \ell = 1,\ldots, d_\kZ.
% \]
Note that \eqref{e.degcond} follows from 
\begin{align*}
\partial_E \omega^E|_{E=E_0}\ne 0,
\end{align*}
cf. \eqref{e.OmegaE}, 
for $\epsilon$ small which is guaranteed by (I1). 
Let $\P_\ell$ be the  projection to the space spanned by $e_\ell$ and $e_{\ell+d_\kZ}$. Then $\P_\ell(\tilde \Psi^{E_0}(2\pi, 0) -\id)=0$, by assumption,   and with \eqref{e.Psi} we get
\begin{align*}
% \P_\ell( \tilde\Phi_E(2\pi, 0) -\id)  & =\e^{  \frac{2\pi \alpha_\ell(E) }{\epsilon} J }- \id_{\R^2}\\
% %& = \e^{  2\pi (\alpha_\ell(E_0) + (E-E_0) \alpha_\ell'(E_0)J  + \kO((E-E_0)^2) }- \id_{\R^2}\\
% & =   \frac{2\pi \alpha_\ell'(E_0) }{\epsilon} (E-E_0) J+ \kO ((E-E_0)^2/\epsilon)
\P_\ell( \tilde\Phi^E(2\pi, 0) -\id)  & =\begin{pmatrix}
                          \cos\left(\alpha_\ell^E/\epsilon\right) & \sin\left(\alpha_\ell^E/\epsilon\right)\\
                          -\sin\left(\alpha_\ell^E/\epsilon\right) & \cos\left(\alpha_\ell^E/\epsilon\right)
                         \end{pmatrix}-\id_{\R^2}\\
                         &={\epsilon}^{-1}{\partial_E \alpha_\ell^{E}|_{E=E_0} (E-E_0)} J+\kO((E-E_0)^2/\epsilon) + \kO(\epsilon^{-1}\mu), 
\end{align*}
where $J$ is the standard $(2,2)$ symplectic matrix.
Due to \eqref{e.degcond} this implies that there is some $c_{\Phi,\ell}>0$ such that for   \begin{equation}\label{e.EBif}
\sqrt\mu \leq |E-E_0| \ll \epsilon
\end{equation}
we have
\[
\|\P_\ell( \tilde\Phi^E(2\pi, 0) -\id_{\R^2})\| \geq c_{\Phi,\ell} |E-E_0|/\epsilon.
\]
Therefore for such $E$ we get 
\[
\|( \tilde\Phi^E(2\pi, 0) -\id)^{-1}\|  = \kO(|E-E_0|^{-1}\epsilon ).
\]
 Then
  \begin{align*}
 \|( \tilde\Phi^E(2\pi, 0) -\id)^{-1}\| =  \kO(\epsilon/\sqrt{\mu}  ).
 \end{align*}
 Due to \eqref{e.tildeRhoE} and \eqref{e.fpEqn} we see that $\tilde\pi_{\mu,E}$ from  \eqref{e.fpEqn} maps
 $\kB_{\tilde\sigma-\kappa}^\kZ(0)$ to itself where $0<\kappa<\tilde\sigma$ provided $\epsilon>0$ is small enough such that $\kO(\sqrt{\mu}) < \tilde\sigma-\kappa$. We use a Cauchy estimate to obtain
 \[
 \|\partial_{\tilde z} \tilde\rho^E\|_{\sigma-\kappa} \leq \|\partial_{\tilde z} \tilde\rho^E\|_{\sigma}/\kappa = \kO(\epsilon^{-1}\mu)
 \]
 uniformly in $E \in (E_1, E_2)$.
  We therefore conclude that  for small enough $\epsilon>0$ the contraction mapping theorem applies to \eqref{e.fpEqn} for energy values $E$ near $E_0$ satisfying \eqref{e.EBif}.
 Next note that due to \eqref{e.degcond} for any fixed $\epsilon>0$ and every $\ell=1,\ldots, d_\kZ$ there  are $\kO(\epsilon^{-1})$ many
 $E$ values such that $\lambda_\ell(E)=1$. Therefore, since $d_\kZ<\infty$, there are $\kO(\epsilon^{-1})$ many
 critical $E$  values; if we exclude an interval of length  $\sqrt{\mu}$ around each of them then we can guarantee
 the persistence of periodic orbits for energy values on a complement of this set.
\end{proof}

%%%%%%%%%%%%%%%%%%%%%%%%%%%%%%%%%%%%%%%%%%%%%%%%%%%%%%%%%%%%%%%%%%%%%%%%%%%%%%%%%%%%%%%%%%
%%%%%%%%%%%%%%%%%%%%%%%%%%%%%%%%%%%%%%%%%%%%%%%%%%%%%%%%%%%%%%%%%%%%%%%%%%%%%%%%%%%%%%%%%%
 %%%%%%%%%%%%%%%%%%%%%%%%%%%%%%%%%%%%%%%%%%%%%%%%%%%%%%%%%%%%%%%%%%%%%%%%%%%%%%%%%%%%%%%%%%
%%%%%%%%%%%%%%%%%%%%%%%%%%%%%%%%%%%%%%%%%%%%%%%%%%%%%%%%%%%%%%%%%%%%%%%%%%%%%%%%%%%%%%%%%%
\appendix
 
 \section{Exponential accurate slow manifolds in general systems}\label{a.genves}
In this appendix we consider the following system 
\begin{align}
 \dot w = \epsilon W_0(w,z_0),\quad \dot z_0 =Z_0(w,z_0)= r_0(w)+L z_0 + a_0(w)z_0 +F_0(w,z_0).\label{e.sysapp}
\end{align}
 with $(w,z_0)\in (\kV+\i\nu_0)\times (\kS+\i\sigma_0)$ and where $\kV$ and $\kS$ are bounded and open sets in $\kW=\R^{n_\kW}$ and the  Banach space  $\kZ$ respectively, and, as before, $\kS$ is a neighbourhood of $0$ and $\sigma_0, \nu_0>0$. 
 Here the $\dot z_0$ equation is a semilinear evolution equation and the slow  vector field $W_0$ is bounded  as detailed below.
We assume the following: 
\begin{itemize}
 \item[(G0)] 
 $L$ is a densely defined closed operator which either generates a strongly continuous semigroup
 or an analytic semigroup. 
 \end{itemize}
 In the following we set $\alpha=0$ in case $L$ generates a strongly continuous semigroup and assume
 $\alpha \in [0,1)$ otherwise. If $L$ generates an analytical semigroup let $\lambda_0\in \R$ be in the resolvent
 set of $L$  such that
 $\|(\lambda_0+L)^{-1}\|\leq 1$. Note that this is possible because $-L$ is sectorial and so there are some $M_L>0$, $\lambda_L \in \R$  and a sector $S = \{\lambda \in\C; |{\rm arg}(\lambda - \lambda_L)| < \phi \}$, where $\phi<\pi/2$  such that  any $\lambda \in \C\setminus S$ is in the resolvent set of
 $-L$ and satisfies $\|(\lambda +L)^{-1}\| \leq M_L/|\lambda -\lambda_L|$ \cite{Henry}.
 Define the Banach space $\kZ_\alpha:= D((\lambda_0+L)^\alpha)$ 
 with norm 
 \[
  \Wert z \Wert: = \|z\|_{\kZ_\alpha} := \| (\lambda_0+L)^\alpha z\|.
  \] 
 Then by construction $ \Wert z \Wert\geq  \|z\|$.
 We assume that $\kS+\i\sigma_0 \subseteq \kZ_\alpha$. 
 Similarly as before for a map $A \in \kE(\kZ;\kZ_\alpha)$ we define
$\Vert A\Wert : = \|A\|_{\kE(\kZ;\kZ_\alpha)}$, for a map $A\in \kE(\kZ_\alpha;\kZ)$ we define $\Wert A\Vert : = \|A\|_{\kE(\kZ_\alpha;\kZ)}$ and for a map
$F:(\kV+\i\nu_0)\times (\kS+\i \sigma_0)\to \kZ$ we define
\[
\Wert {F} \Vert_{\nu_0,\sigma_0} = \sup_{\substack{w  \in \kV+\i\nu_0\\ w \in \kS+\i \sigma_0}}
\| F(w,z)\|.
\]
Similarly we define   $\Wert {W}_0 \Vert_{\nu_0,\sigma_0}$ for a map
$W:(\kV+\i\nu_0)\times (\kS+\i \sigma_0)\to \kW$.

We further assume the following:
 \begin{itemize}
\item[(G1)]  
The functions  ${r}_0:(\kV+\i\nu_0) \rightarrow \kZ^{\C}$, ${a}_0:(\kV+\i\nu_0) \rightarrow \kE(\kZ^{\C}_\alpha;\kZ^\C)$, ${F}_0:(\kV+\i\nu_0)\times (\kS+\i\sigma_0)\rightarrow \kZ^{\C}$
and 
$W_0:(\kV+\i\nu_0) \times (\kS+\i\sigma_0)  \rightarrow \kW^{\C}$ are real analytic and uniformly bounded by
$\delta_0= \Vert {r}_0 \Vert_{\nu_0}$,
$C_{a_0}=\Wert {a}_0 \Vert_{\nu_0}, $
$C_{F_0}=\Wert {F}_0 \Vert_{\nu_0,\sigma_0}, $ and
$C_{W_0} =\Wert {W}_0 \Vert_{\nu_0,\sigma_0}.$
Here,  as before, $F_0(z_0) = \kO(\Wert z_0\Wert^2)$.
Moreover, $C_{W_0}' =\Wert \partial_z {W}_0 \Vert_{\nu_0,\sigma_0},\,C_{W_0}'' =\Wert \partial_z^2 {W}_0 \Vert_{\nu_0,\sigma_0}$.
\item[(G2)]  The operator
$(L+a_0(\cdot))^{-1}:(\kV+\i\nu_0)\rightarrow \kE(\kZ^{\C},\kZ_\alpha^{\C})$ 
 is real analytic with  
 \[
 \Vert (L+ a_0(\cdot))^{-1}\Wert_{\nu_0} \le \frac{K_0}{2}.
 \]
\end{itemize}
Note that   (G2) is true for some $K_0>0$ if 
if $\Wert a_0-\lambda_0\|_{\nu_0}$ is sufficiently small.
In this setting, by semigroup theory, the semiflow of \eqref{e.sysapp} is well defined, and the
$\dot z$ equation is parabolic if $\alpha>0$, see \cite{Henry, P83}.
 We then have the following result:

\begin{theorem}\label{t.G}
Assume (G0-G2).  Let $\sigma_0>\sigma> 0$, $\nu_0>\nu> 0$.
 Then for  $\delta_0\geq 0$ and $\epsilon>0$ sufficiently small the following holds true: There exists a transformation of the fast variables $z_0=  \zeta(w)+  z$, $(w, z)\in (\kV+\i{\nu})\times (\kS+\i {\sigma})$, $z_0 \in \kS+\i \sigma_0$, with $\Wert   \zeta \Wert_{\nu}=\kO(\epsilon)$ so that
 \begin{align*}
  \dot{ z} =   r(w)+\kO( z),
 \end{align*}
where
\begin{align*}
 \Vert  r(w)\Vert \le C_1 \Vert W_0(w,  \zeta(w))\Vert e^{-C_2/\epsilon}.
\end{align*}
Here  $C_1 $ and  $C_2$ are positive constants which depend solely on $C_{a_0}$,  $K_0$, $C_{F_0}$, $C_\kS$, $\sigma_0$, $\sigma$, $\nu_0$, $\nu$, $C_{W_0}$, $C_{W_0}'$ and $C_{W_0}''$.  
\end{theorem}
In other words: $\{ z=0\}$ is an \textnormal{almost} invariant slow manifold that contains all equilibria of \eqref{e.sysapp} near $\{z_0=0\}$. 
 This result was proved by Neishtadt in the case that $d_\kZ<\infty$ and that $L$ is bounded, using a sligthly different iterative step, as outlined in the introduction, Section \ref{s.intro}. The advantage of MacKay's method which we use in the proof is that the slow manifold we construct contains all nearby equilibria. 
 
For the proof of the Theorem  \ref{t.G} we need the following notion: For $R>0$ such that $\kB_R^{\kZ_\alpha^\C}(0) \subseteq \kS+\i\sigma_0$ we define
$C_F''[\nu_0, R]$ as bound of
\[
 \sup_{\substack{ w \in \kV+\i \nu_0\\ \Wert z\Wert \leq R}}
   \|\partial_z^2 F(w,z)\|_{\kE(\kZ_\alpha\times \kZ_\alpha;\kZ)} \leq C_F''[\nu_0, R]. \]
We need the following modification of Lemma \ref{l.analimpl} which is straightforward to
prove:
\begin{lemma}\label{l.ZetaG}
Assume (G0-G2), with the subscript dropped. Moreover assume that
 \begin{align}
\delta < \min(\sigma/K,  2/( K^2 C_{F}''[\nu, K\delta]).\label{e.deqn0}
\end{align}
 Then
 \begin{equation}\label{e.Zeta}
 0 = r(w) + Lz + a(w) z + F(w,z)
 \end{equation}
 has a locally unique solution $z=\zeta(w)\in {\kZ}_{\alpha}^\C$ satisfying:
\begin{align}
 \Wert \zeta(w)\Wert \le K \Vert r(w) \Vert,\label{e.etax}
\end{align}
for every $w\in \kV+\i\nu$. Moreover  
   $\zeta\in C^\omega(\kV+\i\nu;\kZ^\C_{\alpha}).$ 
\end{lemma}

Next we set up an iterative lemma.  

% % 
 \begin{lemma}\label{l.ItGen} (\textbf{The Iterative Lemma})
 Assume (G0-G2) with the subscript dropped and assume \eqref{e.deqn0}.
Let $\zeta=\zeta(w)$ be the solution from Lemma \ref{l.ZetaG}.  Let $\nu_+ = \nu-\xi>0$ and $\sigma_+=\sigma - \xi \ge \kappa>0$.  
If 
\begin{align}
 2 K\max(\epsilon C_W', \delta) \leq \xi \leq \min(\nu,\sigma)
\label{e.CondItLem}
\end{align}
the map $z=\zeta(w)+z_{+}$, $(w,z_+)\in (\kV+\i\nu_+)\times (\kS+\i\sigma_+)$   transforms \eqref{e.sysapp} into
\begin{align}
 \dot w = \epsilon W_+(w,z_+),\quad \dot z_+ = r_+(w)+Lz + a_+(w)z_+ +F_+(w,z_+),\label{e.sysappp}
\end{align}
and there is a constant $c$ which is continuous and increasing in $1/\kappa$, $C_W$, $C_W'$,
$C_W''$, $K$, $C_F$ and $C_\kS=   \Wert z\Wert_\sigma$ 
and depends  on those constants only such that
\begin{subequations}
\begin{align}
 \Vert r_+(w)\Vert \le \frac{K\epsilon  }{\xi}\Vert W_+(w,0)\Vert\delta  & \leq  
 \frac{\epsilon c \delta}{\xi}\quad\mbox{for}\quad w \in \kV+\i\nu_+,\label{e.rp}\\
 \Wert a_+-a\Vert_{\nu_+} &\le c\delta,\label{e.a+}\\
 \Vert (L + a_+(w))^{-1}\Wert_{\nu_+} &\le \frac{K_+}{2}:= \frac{K}{2}+c\delta,\label{e.Kp}\\
\Wert F_+ - F\Vert_{\nu_+,\sigma_+} &\leq \frac{c\delta}{\xi}%\nonumber
\end{align}
\end{subequations}
provided that $c\delta <1$. 
  \end{lemma}
  \bpr
The existence of $\zeta(w)$ follows from Lemma \ref{l.ZetaG}  and yields
\begin{align}
W_+(w,z_+) &= W(w,\zeta+z_+),\nonumber\\
r_+(w) &= -\epsilon \partial_w \zeta(w) W(w,\zeta(w)), \label{e.rho+}\\
a_+(w)&= a(w)-\epsilon  \partial_w \zeta(w) \partial_z W(w,\zeta(w))+\partial_z F(w,\zeta(w)),\nonumber
\end{align}
and 
\begin{align}
F_+(w,z_+) &= -\epsilon\partial_w \zeta(w) \int_0^1 (1-s) \partial_z^2 {W}(w,\zeta(w)+sz_+)\d s z_+^2+\tilde F(w,z_+),\label{e.R1eqn}
\end{align}
where
\begin{align}
\tilde F(w,z_+)&=F(w,\zeta(w)+z_+)-F(w,\zeta(w))-\partial_z F(w,\zeta(w))z_+.\label{e.Rtilde}
\end{align}
Using Lemma \ref{l.ZetaG} and a Cauchy estimate give
\begin{align}
 \Wert \partial_w \zeta \Wert_{\nu-\xi} \le \frac{\Wert \zeta \Wert_{\nu}}{\xi} \leq 
 \frac{K\delta}{\xi},\label{e.etax2}
\end{align}
for $\nu-\xi>0$ and $\xi>0$. Hence \eqref{e.rho+} directly gives \eqref{e.rp}.
To estimate $a_+-a$ we first
note that {by \eqref{e.etax2} 
\[
\Wert \partial_w \zeta \partial_z W(w,\zeta)\Vert_{\nu-\xi}\le   C_W' \frac{\Wert \zeta\Wert_{\nu}}{\xi}
\]
for $w\in \kV+\i(\nu-\xi)$}.
Moreover, since $F$ is quadratic, by a Cauchy estimate,
\begin{align*}
 \Wert \partial_z F(w,\zeta)\Vert 
 &= \Wert \int_0^1\partial_z^2 F(w,{s}\zeta)\zeta{\d s} \Vert\le {2C_F}{\kappa^{-2}}\Wert \zeta\Wert_{\nu},
\end{align*}
for all $w\in \kV+\i(\nu-\xi)$, 
and therefore for all such $w$,
as $\Wert\zeta\Wert_\nu \leq K \delta$ by Lemma \ref{l.ZetaG}, 
\begin{align*}
 \Wert a_+-a \Vert_{\nu-\xi}&\le C_W' \frac{\epsilon K}{\xi}\delta  + 2C_F {\kappa^{-2}}K\delta \le \left(\frac12 + 2C_F {\kappa^{-2}}K\right)\delta = c \delta,
\end{align*}
 where we have used  \eqref{e.CondItLem}.
This proves \eqref{e.a+}.

From
\[
(L+a_+)^{-1} = (L+a)^{-1}(\id + (a_+-a)(L+a)^{-1})^{-1}
\]
we get
\begin{align*}
\Vert (L+a_+(w))^{-1}\Wert &\le 
\Vert  (L+a)^{-1}\Wert( 1- \Vert (a_+-a)(L+a)^{-1}\Vert)^{-1}\\
&  \leq
\frac{K}{2}\left(1 -\frac{K}{2} c \delta\right)^{-1} \le \frac{K}{2}(1 +cK\delta).
\end{align*}
for $K c \delta<1$.  Here we have used that $(1-x)^{-1}\le 1+2x$ if $0\le x \le \frac12$
with $x = \frac{K}{2} c \delta$. Redefining $c$ to $\max(1,K^2)c$ proves \eqref{e.Kp} for $c\delta <1$.

For $F_+-F$, {with $F_+$ from} \eqref{e.R1eqn} we first estimate $\tilde F-F$ {from} \eqref{e.Rtilde}. {This gives:}
\begin{align*}
\Wert \tilde F-F\Vert_{\nu-\xi,\sigma-\xi}
& \le \Wert F(w,\zeta+z_+)-F(w,z_+)\Vert_{\nu-\xi,\sigma-\xi} 
 +\Wert F(w,\zeta)\Vert_{\nu-\xi} \nonumber\\
 & +\Wert \partial_z F(w,\zeta)z_+\Vert_{\nu-\xi,\sigma-\xi}\nonumber\\
 &\le   \Wert \partial_z F\Vert_{\nu,\sigma-\xi/2} \Wert \zeta\Wert_{\nu}+
 \Vert \int_0^1(1-t)\partial_z^2 F(w,t\zeta(w))\zeta^2(w) \d t\Vert_{\nu-\xi} \\
 &+\Wert \int_0^1\partial_z^2 F(w,t\zeta(w))\zeta(w) z_+ \d t\Vert_{\nu-\xi,\sigma-\xi}\nonumber\\
&\le \Wert \partial_z F\Vert_{\nu,\sigma-\xi/2}\Wert \zeta\Wert_{\nu} +\frac12 \Wert \partial_z^2 F\Vert_{\nu,\kappa} \Wert \zeta\Wert_{\nu}^2+ \Wert \partial_z^2 F\Vert_{\nu,\kappa} \Wert \zeta \Wert_{\nu} C_\kS\\
 &\le C_F\left(\frac{2K\delta}{\xi}+{\kappa^{-2}}K\delta\left(K\delta+ 2C_\kS\right)\right).\nonumber
\end{align*}
 Here we have used that $\Wert\zeta\Wert_{\nu} \leq K \delta$ by Lemma \ref{l.ZetaG} and that $\xi\ge 2K\delta$, see \eqref{e.CondItLem}. Therefore
\begin{align*}
\Wert  F_+-F \Vert_{\nu-\xi,\sigma-\xi} &\le
 \frac12 \epsilon \Vert \partial_w\zeta \Vert_{\nu-\xi} \Vert \partial_z^2 W\Vert_{\nu,\sigma-\xi/2}C_\kS^2 
 + \Wert \tilde F-F\Vert_{\nu-\xi,\sigma-\xi}\\
&\le \frac{\epsilon K C_\kS^2\delta C_W''}{2\xi}
+C_F\left(\frac{2K\delta}{\xi}+{\kappa^{-2}}K\delta\left(K\delta+ 2{C_\kS}\right)\right)
\leq \frac{c\delta}{\xi}
\end{align*}
for a suitable choice of $c$ with the given properties. Here we used \eqref{e.CondItLem}.
  \epr
  
\bigskip
\noindent
{\bf Proof of Theorem \ref{t.G}}.
Let  $\xi_0=\xi_1= \frac14 \min\{\nu_0- {\nu},\sigma_0- {\sigma}\}$. For sufficiently small $\delta_0$ and $\epsilon$ we can satisfy
the conditions  \eqref{e.CondItLem} and \eqref{e.deqn0} of the iterative Lemma
 \ref{l.ItGen} applied to \eqref{e.sysapp} with $\xi_0$ as above to obtain that $\delta_1 = \kO(\epsilon)$.  For all successive iterations we use the bound
$C_{F_n}''[\nu_n,K_n\delta_n] = 2 C_{F_n}/\kappa^2$ for $\sigma_n-\kappa>K_n \delta_n$ with $\sigma>\kappa>0$ which changes
 \eqref{e.deqn0} to  the condition 
\begin{equation}\label{e.deltaKappaG}
\delta_n < \min(\sigma_n/K_n,\kappa^2 K_n^{-2}C_{F_n}^{-1}).
\end{equation}
 Here, as before
$K_n$, $\delta_n$ etc.~denote the constants of Lemma   \ref{l.ItGen} after $n$ iterations, i.e., 
$K_n/2$ is  the upper bound in \eqref{e.Kp} of the operator 
$(L + a_n)^{-1}$ on $\kV+\i\nu_n$ where $\nu_n=\nu_2-\sum_{k=1}^{n-1}\xi_k>0$. Furthermore   $C_{F_n}$ is the norm of $F_n$ on $(\kV+\i\nu_n)\times (\kS+\i\sigma_n)$ where $\sigma_n=\sigma_2-\sum_{k=1}^{n-1} \xi_k>0$
and  $C_{a_n}$ is defined analogously.   

Since $\delta_1 = \kO(\epsilon)$ we can satisfy \eqref{e.CondItLem} and \eqref{e.deltaKappaG}   by choosing for sufficiently small $\epsilon$ and therefore can apply Lemma  \ref{l.ItGen} again to obtain  $\delta_2 = \kO(\epsilon^2)$, cf. \eqref{e.rp}.

Applying   the Iterative Lemma \ref{l.ItGen} successively we have
\begin{equation}\label{e.ItGen}
\max(C_{a_{n+1}} - C_{a_n}, \tfrac12( K_{n+1}-K_n), C_{F_{n+1}}-C_{F_n}) \leq c_n\delta_n/\xi_n,
\end{equation}
and
\[
\delta_{n+1} \leq c_n \delta_n \epsilon/ \xi_n,
\] where we choose $\xi_n\leq 1$.
Taking $\xi_n\geq 2c_n\epsilon>0$ we get 
\begin{align*}
 \delta_{n+1}\le 2^{-1}\delta_n\le 2^{-n}\delta_2 \leq  2^{-n}  C \epsilon^2.
\end{align*}
Then from \eqref{e.ItGen} we get
\[
\max(\tfrac12(K_n- K_2),  C_{a_n}-C_{a_2},  C_{F_n} - C_{F_2}) \leq  \sum_{n=2}^N \delta_n /\epsilon
\leq 2 C\epsilon.
\] by the geometric series formula.
Since $c_n$ is a continuous and increasing  function of $C_{F_n}$, $C_{a_n}$, $K_n$ and $1/\kappa$ and
these are bounded in $n$, there is some $c_*$ such that $c_n \leq c_*$ for all $n=1,\ldots, N$. We then set $\xi_n = 2 c_*\epsilon$ for $n=2,\ldots, N$.
From the requirement 
\[
 \nu\leq \nu_2 - 2 c_*(N-1_\epsilon = \nu_{N+1}, \quad \sigma \leq  \sigma_2 - 2c_*(N-1)\epsilon = \sigma_{N+1}
\]
we can take $N$ to be as $N= \lceil\frac{M}{4c_*\epsilon}\rceil$ where $M=\min(\nu_0-\nu, \sigma_0-\sigma)$
which concludes the proof.  
\qed

\medskip
The following example shows that the error $\|r_n(w)\|$ 
 in the slow manifold
after $n$ steps
does not behave like  $\kO((\epsilon \Vert W(w,\cdot)\Vert)^n)$
in general, as conjectured by MacKay \cite{mac1}, see the discussion in the introduction.

\begin{example}\label{ex.e1}\rm
We consider the simple linear, {two-dimensional} example:
\begin{align}\label{e.counterEx}
 \dot w & ={\epsilon W(w,z)=} \epsilon w,\,\quad \dot z ={Z(w,z)=} \epsilon w - z.
\end{align}
Here $z=0$ is actually normally hyperbolic and there is an invariant slow manifold nearby:
\begin{align}
 z = \frac{\epsilon}{1+\epsilon}w.\label{zInv}
\end{align}
Notice that $(w,z)=(0,0)$ is an equilibrium (saddle for $\epsilon>0$). 
% It is furthermore unique in t
 Applying MacKay's method $n$ times to this example gives
 \begin{align}
 z_{n} = 0 \quad\mbox{where}\quad  z = z_n +\sum_{k=0}^n \zeta_k(w) = \sum_{k=1}^n (-1)^{k}  \epsilon^{k+1} w,\label{zK}
 \end{align}
as an approximately invariant slow manifold. In this case the approximation \eqref{zK} also coincide with the $n$th degree Taylor polynomial of \eqref{zInv}. The error field is 
$r_n(w) = \zeta_n(w) = (-1)^{n}   \epsilon^{n+1} w$ 
which directly illustrates why MacKay's conjecture is incorrect. For a nonlinear example, one may replace $W(w,z)=w$ by $\sW (w)$ satisfying $\sW(0)=0,\,\sW'(0)\ne 0$. Then $(w,z)=(0,0)$ is still a hyperbolic equilibrium, and we have $r_1(w) = -\epsilon^2 \sW(w)$ and
$r_2(w) = \epsilon^3 \sW'(w) \sW(w)$ which cannot be bounded above  from above by an expression with   $\vert \sW(w)\vert^2$ as a factor. 
\end{example}

%\section{Conclusions}

 %%%%%%%%%%%%%%%%%%%%%%%%%%%%%%%%%%%%%%%%%%%%%%%%%%%%%%%%%%%%%%%%%%%%%%%%%%%%%%%%%%%%%%%%%%
%%%%%%%%%%%%%%%%%%%%%%%%%%%%%%%%%%%%%%%%%%%%%%%%%%%%%%%%%%%%%%%%%%%%%%%%%%%%%%%%%%%%%%%%%%

%\bibliography{refs}

\bibliographystyle{plain}

\newpage

\end{document}